\newtheorem{lemma}{Lemma}[section]
\newtheorem{remark}[lemma]{Remark}
\newtheorem{proposition}[lemma]{Proposition}
\newtheorem{theorem}[lemma]{Theorem}
\newtheorem{corollary}[lemma]{Corollary}
\newcommand*\Laplace{\mathop{}\!\mathbin\bigtriangleup}
\newcommand{\1}{\mathbbm{1}}
\providecommand{\F}{{\ensuremath{\mathbbm{F}}}}
\providecommand{\N}{{\ensuremath{\mathbbm{N}}}}
\providecommand{\R}{{\ensuremath{\mathbbm{R}}}}
\providecommand{\E}{{\ensuremath{\mathbb{E}}}}
\renewcommand{\P}{{\ensuremath{\mathbb{P}}}}
\newcommand{\lipconstY}{L_\mathfrak{y}}
\newcommand{\lipconstZ}{L_\mathfrak{z}}
\newcommand{\sfF}{\mathsf{F}}
\author{Martin Hutzenthaler\thanks{Faculty of Mathematics, University of Duisburg-Essen, Essen, Germany;\newline\hspace*{.5cm} e-mail: \texttt{martin.hutzenthaler}\textcircled{\texttt{a}}\texttt{uni-due.de}}
\and
Thomas Kruse\thanks{Institute of Mathematics, University of Gie{\ss}en, Gie{\ss}en, Germany;\newline\hspace*{.5cm} e-mail: \texttt{thomas.kruse}\textcircled{\texttt{a}}\texttt{math.uni-giessen.de}}
\and 
Tuan Anh Nguyen\thanks{Faculty of Mathematics, University of Duisburg-Essen, Essen, Germany; \newline\hspace*{.5cm} e-mail: \texttt{tuan.nguyen}\textcircled{\texttt{a}}\texttt{uni-due.de}}
}
\begin{document}\sloppy

\title{On the speed of convergence of Picard iterations\\
of backward stochastic differential equations}
\maketitle
{
\let\thefootnote\relax\footnotetext{\emph{Key words and phrases:} backward stochastic differential equation, Picard iteration, a priori estimate, semilinear parabolic partial differential equation}
\let\thefootnote\relax\footnotetext{\emph{AMS 2020 subject classification}: 65C99, 60H99, 60G99}
}
\begin{abstract}
It is a well-established fact in the scientific literature that Picard iterations of backward stochastic
differential equations with globally Lipschitz continuous nonlinearity converge at least exponentially fast to the solution.
In this paper we prove that this convergence is in fact
at least square-root factorially fast.
We show for one example that no higher convergence speed is possible in general.
Moreover, if the nonlinearity is $z$-independent,
then the convergence is even factorially fast.
Thus we reveal a phase transition in the speed of convergence
of Picard iterations of backward stochastic differential equations.
\end{abstract}
\tableofcontents

\section{Introduction }

Since their introduction by Pardoux \& Peng in \cite{PardouxPeng1990} backward stochastic differential equations (BSDEs) 
have been extensively studied in the scientific literature and have found
numerous applications. For example, BSDEs provide a solution approach for stochastic optimal control problems, BSDEs appear in the pricing and hedging of options in mathematical finance, and BSDEs provide stochastic representations of semilinear parabolic partial differential equations (PDEs).

A standard approach for proving existence results for BSDEs is to construct a contraction mapping whose fixed point is the solution $(Y,Z)$ of the BSDE.
The associated fixed point iterations, the so-called Picard iterations,
are a key component of
several numerical approximation methods for BSDEs. We refer, e.g., to \cite{BenderDenk2007,bender2008time}
for numerical approximation methods for BSDEs based on Picard iterations and 
the least squares Monte Carlo method, 
we refer, e.g., to 
\cite{GobetLabart2010, LabartLelong2013}
for numerical approximation methods for BSDEs based on Picard iterations and adaptive control variates,
we refer, e.g., to 
\cite{BriandLabart2014, 
      GeissLabart2016}
for numerical approximation methods for BSDEs based on Picard iterations and Wiener chaos expansions, and we refer, e.g., to 
\cite{EHutzenthalerJentzenKruse2016, HJKNW2018, EHutzenthalerJentzenKruse2017,HutzenthalerKruse2017,hjk2019overcoming,beck2020overcoming,hutzenthaler2020multilevel} 
for numerical approximation methods for BSDEs based on Picard iterations
and a multilevel technique. Precise estimates on the speed of convergence of the Picard iterations $(Y_n,Z_n)_{n\in \N_0}$ to the solution $(Y,Z)$ of the BSDE are essential for the error analyses of these numerical approximation methods for BSDEs where 
$\N=\{1,2,\ldots\}$ and $\N_0=\N\cup\{0\}$.

Picard iterations, e.g., of ordinary differential equations converge not only exponentially fast but
even factorially fast under suitable assumptions.
Picard iterations of BSDEs are known to converge at least square-root factorially fast
if the nonlinearity is z-independent; see the proof of \cite[Theorem 3.1]{PardouxPeng1990}.
In the general case of z-dependent nonlinearities we have only found results
proving that Picard iterations converge at least 
exponentially fast 
(see, e.g., \cite[Theorem 2.1]{ElKarouiPengQuenez1997}, \cite[Theorem 4.3.1]{Zha17}, and \cite[Theorem 6.2.1]{pham2009continuous}).

In this article we prove for BSDEs with z-independent and 
globally Lipschitz continuous 
nonlinearities that the Picard iterations
converge in fact factorially fast.
Moreover, we show for BSDEs with z-dependent and globally Lipschitz continuous
nonlinearities
that the Picard iterations converge at least square-root factorially fast.
Somewhat surprisingly this speed of convergence cannot be improved in general.
More precisely, we establish for a linear example BSDE a corresponding lower
bound.
We thereby reveal a phase transition in the speed of convergence of Picard iterations between the z-independent and the z-dependent case. 
Theorem~\ref{t01} below illustrates the main results of this article.

\begin{theorem}\label{t01}
Let
$T\in (0,\infty)$,
 $d,m\in\N$, 
$\lipconstY,\lipconstZ \in [0,\infty) $,
$b\in \R^m$, 
let
$\lVert  \cdot\rVert \colon \cup_{n\in\N}\R^n\to[0,\infty)$
satisfy for all $n\in\N$ that $\lVert  \cdot\rVert|_{\R^n}$
is the standard norm on $\R^n$,
let
$\lVert  \cdot\rVert_{\sfF} \colon \R^{d\times m}\to[0,\infty)$ denote the Frobenius norm on $\R^{d\times m}$,
let $(\Omega,\mathcal{F},\P, (\F_t)_{t\in[0,T]}) $ be a filtered probability space which satisfies the usual conditions\footnote{Let $T \in (0,\infty)$ and let ${\bf \Omega} = (\Omega,\mathcal{F},\P, (\F_t)_{t\in[0,T]})$ be 
a filtered probability space. 
Then we say that ${\bf \Omega}$
satisfies the usual conditions if and only if 
it holds for all $t \in [0,T)$ that $\{ A\in \mathcal F: \P(A)=0 \} \subseteq \F_t 
= \cap_{ s \in (t,T] } \F_s$.}, 
let $f\colon[0,T]\times \Omega \times \R^d\times\R^{d\times m}\to\R^d$ be measurable,
assume for all  $t\in [0,T]$, $y,\tilde{y}\in\R^d$, $z,\tilde{z}\in\R^{d\times m}$ it holds a.s.\
  that
\begin{equation}  
\begin{split}
    \lVert f(t,y,z)-f(t,\tilde{y},\tilde{z})\rVert
    \leq \lipconstY \lVert y-\tilde{y}\rVert
    + \lipconstZ \lVert z-\tilde{z}\rVert_{\sfF},
\end{split} 
\end{equation}
let $W\colon [0,T]\times \Omega \to\R^m$ be a standard 
$(\F_t)_{t\in[0,T]}$-Brownian motion with continuous sample paths,
let $\xi \colon \Omega\to\R^d$ be $\F_T$-measurable,
let
$Y^k\colon[0,T]\times\Omega\to\R^d$, 
$k\in \N_0\cup\{\infty\}$,
be adapted
 with continuous sample paths,
let 
$Z^k\colon[0,T]\times\Omega\to\R^{d\times m}$,
$k\in \N_0\cup\{\infty\}$,
be progressively measurable, 
assume that
for all $s\in [0,T]$, $k\in\N_0\cup\{ \infty \}$
it holds  a.s.\  that
 $\int_0^T\E[\Vert\xi\rVert^2+\lVert f(t,0,0)\rVert^2+\lVert Y_t^\infty \rVert^2+\lVert Z_t^k\rVert^2_{\sfF}]\,dt<\infty$,
$Y^0_s=0$, $Z^0_s=0$,
and
\begin{align} 
Y^{k+1}_{s}=\xi+\int_{s}^{T}f(t,Y_t^k, Z_t^k)\,dt- \int_{s}^{T}
Z^{k+1}_t\,dW_t,
\end{align}
and let $e_k\in [0,\infty] $, $ k\in\N$, satisfy for all $ k\in\N$ that
\begin{align}
e_k=\left(\E\!\left[\sup_{t\in[0,T]}\left(\left\lVert Y^{k}_t-Y_t^\infty\right\rVert^2\right)+\int_0^{T}
\left\lVert Z^{k}_t-Z_t^\infty\right\rVert^2_{\sfF}\,dt\right]\right)^{\nicefrac{1}{2}}.
\end{align}
Then \begin{enumerate}[(i)]\itemsep0pt
\item \label{c01}there exists $c\in [0,\infty)$ such that for all $ k\in\N$ it holds
that
$
e_k
\leq \frac{c^k}{\sqrt{k!}},
$
\item\label{c05} if, in addition to the above assumptions, it holds that $\lipconstZ=0$,
then there exists $c\in [0,\infty)$ such that for all $ k\in\N$
it holds that
$e_k
\leq \frac{c^k}{k!},
$ 
and 
\item\label{c06}
 if, in addition to the above assumptions, 
$d=T=1$,
$\xi=2^{m/2}e^{-\frac{\lVert W_1\rVert^2}{2}}$,
and for all $t\in [0,T]$, $y\in \R$, $z\in \R^{1\times m}$ 
 it holds a.s.\ that $f(t,y,z)=z\cdot b$,
%
%
%
then there exists $c\in [0,\infty)$ such that for all $ k\in\N\cap[ \lVert b \rVert ^2 -1,\infty)$ it holds
that
$\frac{1}{2}
\left(\frac{ \lVert b \rVert ^2}{4}\right)^{\lfloor\frac{k+1}{2}\rfloor}
\frac{1}{\sqrt{k!}}\leq 
e_k
\leq  \frac{c^k}{\sqrt{k!}}.
$
\end{enumerate} 
\end{theorem}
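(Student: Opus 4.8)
The plan is to treat the three claims separately: (i) and (ii) are a priori estimates, while (iii) is obtained by solving the Picard iteration essentially in closed form. Throughout, $Y^\infty$ denotes the solution of the BSDE with data $(\xi,f)$, so that the integrability hypotheses together with standard BSDE estimates guarantee $\E[\sup_{t\in[0,T]}\lVert Y^\infty_t\rVert^2]<\infty$ and $e_k<\infty$ for every $k$. For (i) I would run the usual weighted-norm contraction argument while keeping the weight explicit. Fix $\beta\in[2,\infty)$, set $\lVert(Y,Z)\rVert_\beta^2:=\E[\int_0^T e^{\beta t}(\lVert Y_t\rVert^2+\lVert Z_t\rVert_{\sfF}^2)\,dt]$, apply Itô's formula to $s\mapsto e^{\beta s}\lVert Y^{k+1}_s-Y^\infty_s\rVert^2$, take expectations (the stochastic integral is a genuine martingale by the integrability hypotheses), use the Lipschitz bound on $f$ and the inequality $2ab\le\tfrac\beta2 a^2+\tfrac2\beta b^2$, and absorb the $\tfrac\beta2$-term on the left to get
\[
\lVert(Y^{k+1}-Y^\infty,Z^{k+1}-Z^\infty)\rVert_\beta^2\le\tfrac{4\max(\lipconstY^2,\lipconstZ^2)}{\beta}\lVert(Y^{k}-Y^\infty,Z^{k}-Z^\infty)\rVert_\beta^2 .
\]
Iterating $k$ times from $(Y^0,Z^0)=0$, bounding $\lVert(Y^\infty,Z^\infty)\rVert_\beta\le e^{\beta T/2}(\E[\int_0^T(\lVert Y^\infty_t\rVert^2+\lVert Z^\infty_t\rVert_{\sfF}^2)\,dt])^{1/2}$, and recovering the supremum in $e_k$ from the BSDE solved by $Y^{k}-Y^\infty$ via the Burkholder--Davis--Gundy and Doob inequalities, one obtains $e_k\le C(2\max(\lipconstY,\lipconstZ)/\sqrt\beta)^{k-1}e^{\beta T/2}$ with $C$ independent of $k,\beta$. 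The one genuinely new point is to let the weight depend on $k$: with $\beta\asymp(k-1)/T$ and Stirling's formula ($k^{-k/2}\lesssim e^{-k/2}k^{1/4}(k!)^{-1/2}$) this becomes $e_k\le c^k/\sqrt{k!}$, after enlarging $c$ to absorb subexponential factors and the finitely many small $k$.

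For (ii), with $\lipconstZ=0$ the stochastic integral vanishes upon conditioning, which is exactly what yields a full extra power of time per step. Conditioning the BSDE for $Y^{k+1}-Y^\infty$ on $\F_s$ gives $\lVert Y^{k+1}_s-Y^\infty_s\rVert\le\lipconstY\,\E[\int_s^T\lVert Y^{k}_t-Y^\infty_t\rVert\,dt\mid\F_s]$; iterating $k$ times down to $Y^0=0$ and evaluating the resulting ordered-simplex integral gives $\lVert Y^{k}_s-Y^\infty_s\rVert\le\tfrac{(\lipconstY(T-s))^{k}}{k!}\,\E[\sup_{t\in[0,T]}\lVert Y^\infty_t\rVert\mid\F_s]$. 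Taking the supremum over $s$ and applying Doob's $L^2$-inequality bounds $\E[\sup_t\lVert Y^{k}_t-Y^\infty_t\rVert^2]$ by $4\tfrac{(\lipconstY T)^{2k}}{(k!)^2}\E[\sup_t\lVert Y^\infty_t\rVert^2]$, and the Itô isometry applied to the same BSDE bounds $\E[\int_0^T\lVert Z^{k}_t-Z^\infty_t\rVert_{\sfF}^2\,dt]$ by a constant times $\tfrac{(\lipconstY T)^{2(k-1)}}{((k-1)!)^2}$; together these give $e_k\le c^k/k!$.

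For (iii), linearity and the Markov property allow an explicit computation. Put $g(x):=2^{m/2}e^{-\lVert x\rVert^2/2}$ and write $Y^{k}_s=v_k(s,W_s)$, $Z^{k}_s=(\nabla v_k)(s,W_s)$; Itô's formula turns the recursion into $v_0\equiv0$ and $\partial_s v_{k+1}+\tfrac12\Laplace v_{k+1}+b\cdot\nabla v_k=0$ with $v_{k+1}(1,\cdot)=g$. Since the heat semigroup $(P_r)_{r\ge0}$ commutes with $\nabla$ and $\int_s^1\tfrac{(1-t)^{j}}{j!}\,dt=\tfrac{(1-s)^{j+1}}{(j+1)!}$, a short induction via Duhamel's formula gives $v_k(s,\cdot)=P_{1-s}[\sum_{j=0}^{k-1}\tfrac{(1-s)^{j}}{j!}(b\cdot\nabla)^{j}g]$. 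Evaluating at $(s,x)=(0,0)$, so that $v_k(0,0)=\sum_{j=0}^{k-1}\tfrac1{j!}\E[((b\cdot\nabla)^{j}g)(G)]$ with $G\sim N(0,I_m)$, and using the Gaussian integration-by-parts identity $\E[((b\cdot\nabla)^{j}g)(G)]=\lVert b\rVert^{j}\E[g(G)H_j(b\cdot G/\lVert b\rVert)]$ together with $\E[e^{-X^2/2}H_{2i}(X)]=\tfrac{(-1)^i}{\sqrt2}\tfrac{(2i)!}{i!\,4^{i}}$ and $\E[e^{-X^2/2}H_{2i+1}(X)]=0$ for $X\sim N(0,1)$ (the components of $G$ orthogonal to $b$ being integrated out), one obtains $Y^{k}_0=\sum_{i=0}^{\lfloor(k-1)/2\rfloor}\tfrac{(-1)^i}{i!}(\lVert b\rVert^2/4)^{i}$. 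Since $e_k\to0$ by (i), $Y^\infty_0=\lim_k Y^{k}_0=e^{-\lVert b\rVert^2/4}$ and hence $Y^\infty_0-Y^{k}_0=\sum_{i\ge j_0}\tfrac{(-1)^i}{i!}(\lVert b\rVert^2/4)^{i}$ with $j_0:=\lfloor(k+1)/2\rfloor$. For $k\ge\lVert b\rVert^2-1$ one has $j_0+1\ge\lVert b\rVert^2/2$, so this series is alternating with terms decreasing by a factor $\le\tfrac12$, and therefore $|Y^\infty_0-Y^{k}_0|\ge\tfrac12(\lVert b\rVert^2/4)^{j_0}/j_0!$. Combining this with $j_0!\le\sqrt{k!}$ (which holds since $j_0=\lceil k/2\rceil$ and $\binom{2j_0}{j_0}\ge2j_0$) and with the trivial bound $e_k\ge|Y^{k}_0-Y^\infty_0|$ (both quantities being deterministic, as $\F_0$ is trivial) yields the lower bound, while the upper bound is (i) applied with $(\lipconstY,\lipconstZ)=(0,\lVert b\rVert)$, using that the Frobenius norm on $\R^{1\times m}$ is the Euclidean norm.

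The step I expect to be the main obstacle is the explicit part of (iii): one must rigorously justify the Markovian representation and the smoothness and integrability of the functions $v_k$ that are needed for Itô's and Duhamel's formulas, and then keep careful enough track of the constants that the alternating-tail estimate is valid on exactly the stated range $k\ge\lVert b\rVert^2-1$ (and that $j_0!\le\sqrt{k!}$, etc.). By contrast, (ii) is a refinement of the classical Pardoux--Peng argument for $z$-independent drivers, and in (i) the only ingredient beyond the textbook weighted-norm contraction is the idea of optimizing the weight $\beta$ as a function of $k$.
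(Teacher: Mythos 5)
Your proposal is correct and its global architecture coincides with the paper's (weighted a priori estimates with a $k$-dependent weight for the upper bounds; an explicit Hermite/alternating-series computation for the lower bound), but two of the three items are reached by genuinely different routes. For item (i) you iterate the purely exponential weight $e^{\beta t}$ and only at the end set $\beta\asymp k/T$; the paper instead iterates the mixed weight $t^{\alpha}e^{\lambda t}/\Gamma(\alpha+1)$, incrementing $\alpha$ by $0$ or $1$ according to whether the $\lipconstY$- or the $\lipconstZ$-part of the Lipschitz bound is used at that step (\cref{b19}\eqref{b06} and the binomial sum in \cref{b20}). The payoff of the paper's heavier bookkeeping is a single estimate from which both \eqref{c01} and the factorial rate \eqref{c05} drop out by specializing $\lipconstZ=0$ (\cref{r01}, \cref{r02}); your simpler weight suffices for \eqref{c01} but forces you to prove \eqref{c05} separately, which you do by the classical Pardoux--Peng pathwise iteration of conditional expectations over the ordered simplex --- a perfectly valid and arguably more transparent argument, at the cost of needing $\E[\sup_t\lVert Y^\infty_t\rVert^2]<\infty$ as an extra (standard) input. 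For item (iii) your computation is essentially the paper's \cref{a09}/\cref{a10}, with two pleasant simplifications: rotating so that $b$ is aligned with a coordinate axis replaces the paper's multinomial expansion \eqref{m01}--\eqref{m02} by a one-dimensional Hermite identity, and identifying $Y^\infty_0=e^{-\lVert b\rVert^2/4}$ as $\lim_k v_k(0,0)$ via $e_k\to 0$ sidesteps the appeal to BSDE uniqueness and Feynman--Kac. The points you flag as delicate are indeed where the paper spends its effort: the rigorous identification $Y^k_s=v_k(s,W_s)$, $Z^k_s=(\nabla v_k)(s,W_s)$ is done by induction combining the conditional-expectation representation with uniqueness of the continuous-semimartingale decomposition, and the alternating-tail bound is exactly the paper's \cref{a09}\eqref{a21} with $\epsilon=\nicefrac12$, which is what produces the threshold $k\geq\lVert b\rVert^2-1$. (Minor quibbles: your justification of $\lfloor\frac{k+1}{2}\rfloor!\leq\sqrt{k!}$ via $\binom{2j_0}{j_0}\geq 2j_0$ is garbled though the inequality itself is correct, and $Y^k_0-Y^\infty_0$ is a.s.\ constant because of the Markov representation at $W_0=0$, not because $\F_0$ is trivial --- the usual conditions do not force that.)
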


Item~\eqref{c01} of \cref{t01} is a direct consequence of \cref{b20} and \cref{r01}. Item~\eqref{c05} of \cref{t01} follows from \cref{b20} and \cref{r02}.
Item~\eqref{c01} of \cref{t01} and \cref{a10}
prove Item~\eqref{c06} of \cref{t01}.
 The proof of 
Item~\eqref{c01} of \cref{t01}
is based on \cref{b19} 
which shows for all $t\in[0,T]$,  $k \in\N$, $\lambda\in(0,\infty)$
   that
\begin{align}\begin{split}
&
\E\!\left[e^{\lambda t}
\lvert Y^k_t-Y^\infty_t\rvert^2+\int_{t}^{T}
e^{\lambda s }
\lvert Z^k_s-Z^\infty_s\rvert^2\,ds
\right]\\
&
\leq \frac{1}{\lambda}
\E\!\left[\int_{t}^{T}e^{\lambda s}\lvert
f_s(Y_s^{k-1},Z_s^{k-1})
-f_s(Y_s^\infty,Z_s^\infty)
\rvert^2\,ds\right],\end{split}\label{l06}
\end{align}
  based on the Lipschitz continuity of $f$,  iterating \eqref{l06}  $k \in\N$ times
  and then setting $\lambda=k$ to get an upper bound of the form 
$c^k/ \sqrt{k^k}$ for 
$e_k$.


We finally discuss some possible consequences of Item~\eqref{c06} of \cref{t01} on the performance of numerical approximation methods for BSDEs based on Picard iterations in high-dimensional situations. 
To this end we consider a sequence of BSDEs indexed by the dimension $m \in \N$ of the driving Brownian motion $W$ whose associated Lipschitz constants $L_{\mathfrak{z},m} \in [0,\infty)$, $m\in \N$, grow for some $\alpha\in(0,\infty)$ like $m^\alpha$ as $m\to \infty$. Item~\eqref{c06} of \cref{t01} shows that it is possible in such a situation that the approximation errors $e_{k,m}$, $k,m\in \N$, grow faster in the dimension $m\in \N$ than any polynomial in the sense that 
for all $p\in [0,\infty)$ there exists $N\in \N$ such that for all $k\in \N\cap [N,\infty)$ it holds that $\liminf_{m\to \infty}\frac{e_{k,m}}{m^p}=\infty$.

The remainder of this article is organized as follows. In \cref{a09-1} we provide lower bounds for the convergence speed of Picard iterations. In \cref{a09+1} we establish in \cref{a10} lower bounds for the convergence speed of Picard iterations for a linear example BSDE. In our proof of  \cref{a10} we employ lower bounds for the convergence speed of Picard iterations for a linear example PDE which we prove in \cref{a09} in \cref{a09+-0}. In \cref{b19} in \cref{sec:apriori} we establish explicit a priori estimates for certain backward It\^o processes in appropriate $L^2$-norms. 
In \cref{sec:upper_bound} we provide upper bounds for the convergence speed of Picard iterations of BSDEs. \cref{b20} establishes an explicit bound for the $L^2$-distance between the Picard iterations and the solution of a BSDE with a globally Lipschitz continuous nonlinearity. In \cref{r01} we employ the estimate of \cref{b20} to obtain the square root-factorial speed of convergence of Picard iterations. In \cref{r02} we employ the estimate of \cref{b20} to obtain the factorial speed of convergence of Picard iterations in the z-independent case.

%
%
%

\section{Lower bounds for the convergence speed of Picard iterations}\label{a09-1}
In this section we provide lower bounds for the convergence speed of Picard iterations of BSDEs. 
In \cref{a09} in \cref{a09+-0} we establish lower bounds for the convergence speed of Picard iterations for a linear example PDE. We employ \cref{a09} in our proof of \cref{a10} in \cref{a09+1} to provide lower bounds for the convergence speed of Picard iterations for a linear example BSDE.
\cref{a10}
shows that square-root factorial convergence speed cannot be improved
  up to exponential factors in the case of $z$-dependent drivers.
Item \eqref{c05} of \cref{t01}
shows in the case of $z$-independent drivers that factorial speed of convergence is possible.
\cref{l01}
observes that factorial speed of convergence cannot be improved
up to exponential factors in the case of $y$-dependent drivers.

\subsection{Lower bounds for the convergence speed of Picard iterations for an example PDE}
\label{a09+-0}
\begin{lemma}\label{a09}
Let 
$d\in\N$, $b=(b_1,b_2,\ldots,b_d)\in \R^d$, 
let $\langle \cdot,\cdot\rangle \colon\R^d\times\R^d\to\R$ denote the standard scalar product on $\R^d$, let 
 $\lVert  \cdot\rVert \colon \R^d\to[0,\infty)$   denote the standard norm 
 on $\R^d$,
let $(\Omega,\mathcal{F},\P)$ be a probability space, 
let $W=(W^1,W^2,\ldots,W^d)\colon [0,1]\times \Omega \to \R^d$ be a standard Brownian motion,
 let 
$v^n\colon [0,1]\times \R^d\to \R$, $n\in\N_0\cup\{\infty\}$, satisfy for all
$t\in[0,1 ]$,
$x\in\R^d$, $n\in\N$ that
$v^0(t,x)=0$, 
\begin{align}\begin{split}
&v^n(t,x)=
\E\!\left[2^{d/2}
\exp\! \left(-\tfrac{\lVert x+W_1-W_t\rVert^2}{2}\right)
\right]
\\
&+
\sum_{k=1}^{n-1}
\sum_{\mu_1,\mu_2,\ldots,\mu_k=1 }^{d}
\Biggl[
\tfrac{(1-t)^{k}}{k!}
b_{\mu_1}
b_{\mu_2}
\cdots
b_{\mu_k}
\E\!\left[2^{d/2}
\tfrac{\partial^k}{\partial x_{\mu_1}\partial x_{\mu_2}\ldots\partial x_{\mu_k}}
\exp\! \left(-\tfrac{\lVert x+W_1-W_t\rVert^2}{2}\right)
\right]\Biggr],\label{a03}
\end{split}\end{align}
and 
\begin{equation}\label{a16}
v^\infty(t,x)=\E\!\left[2^{d/2}\exp \!\left(-\tfrac{\lVert x+b(1-t)+W_1-W_t\rVert^2}{2}\right)\right].
\end{equation}
Then
\begin{enumerate}[(i)]\itemsep0pt
\item\label{a19} it holds for all $t\in[0,1]$, $x\in\R^d$ that
$v^{\infty}\in C^{\infty}([0,1]\times\R^d,\R)$, $ v^\infty(1,x)= 2^{d/2}e^{-\frac{\lVert x\rVert^2}{2}}$, and
\begin{equation}
\frac{\partial v^{\infty}}{\partial t}(t,x)+
\frac{1}{2}(\Laplace_x v^\infty )(t,x)
+
\left\langle b,(\nabla_x v^\infty)(t,x)
\right\rangle=0,
\end{equation}\item\label{a13} it holds for all $n\in \N_0$, $t\in[0,1)$, $x\in\R^d$ that
$v^n \in C^{\infty}([0,1]\times\R^d,\R)$, $ v^n(1,x)= 2^{d/2}e^{-\frac{\lVert x\rVert^2}{2}}$, 
\begin{align}\begin{split}
v^{n+1}(t,x)&=\E\!\left[2^{d/2}\exp\!\left(-\frac{\lVert x+W_1-W_t\rVert^2}{2}\right)\right]\\
&\qquad+
\int_t^1\E\!\left[
\Bigl\langle b,
( \nabla_xv^n)(s,x+W_s-W_t)\Bigr\rangle
\right]\!ds,
\end{split}\end{align}
and
\begin{align}\begin{split}
 \nabla_x v^{n+1}(t,x)&=\E\!\left[2^{d/2}\exp\!\left(-\frac{\lVert x+W_1-W_t\rVert^2}{2}\right)\frac{W_1-W_t}{1-t}\right]\\
&\qquad+
\int_t^1\E\!\left[
\Bigl\langle b,
( \nabla_xv^n)(s,x+W_s-W_t)\Bigr\rangle
\frac{W_s-W_t}{s-t}\right]\!ds,
\end{split}\end{align}

\item \label{a14}
it holds for all $n\in\N$ that
\begin{align}
v^n(0,0)=1+\sum_{i=1}^{\lfloor\frac{n-1}{2}\rfloor}
\frac{(-1)^i \lVert b\rVert^{2i}}{4^ii!},
\end{align}
\item \label{a18}
it holds that
\begin{align}v^\infty(0,0)=\exp\!\left(-\frac{\lVert b\rVert^2}{4}\right),
\end{align}
and 
\item 
\label{a21}it holds for all $\epsilon\in (0,1)$,
$n\in\N\cap\bigl[\frac{1}{2\epsilon}\lVert b\rVert^2-1,\infty\bigr)$ that
\begin{align}
\left(\frac{\lVert b \rVert^2}{4}\right)^{\lfloor\frac{n+1}{2}\rfloor}\frac{1-\epsilon}{\lfloor\frac{n+1}{2}\rfloor!}
\leq 
\lvert
 v^\infty(0,0)- v^n(0,0)\rvert\leq 
\left(\frac{\lVert b \rVert^2}{4}\right)^{\lfloor\frac{n+1}{2}\rfloor}\frac{1}{\lfloor\frac{n+1}{2}\rfloor!}\frac{1}{1-\epsilon}.
\end{align}%
\end{enumerate}
\end{lemma}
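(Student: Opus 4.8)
The plan is to reduce everything to explicit Gaussian integrals. Write $g\colon\R^d\to\R$ for the Gaussian $g(x)=2^{d/2}e^{-\|x\|^2/2}$ and, for $r\in[0,1]$, let $(Q_rh)(x):=\E[h(x+W_r)]$ be the associated heat semigroup, so that $\E[2^{d/2}\exp(-\tfrac{\|x+W_1-W_t\|^2}{2})]=(Q_{1-t}g)(x)$. Completing the square in the defining Gaussian integral yields $(Q_rg)(x)=\bigl(\tfrac{2}{1+r}\bigr)^{d/2}\exp\bigl(-\tfrac{\|x\|^2}{2(1+r)}\bigr)$, which is $C^\infty$ on $(-1,\infty)\times\R^d$; in particular $(t,x)\mapsto(Q_{1-t}g)(x)$ and all of its $x$-derivatives are $C^\infty$ on $[0,1]\times\R^d$. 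Using the elementary identity $\sum_{\mu_1,\dots,\mu_k=1}^{d}b_{\mu_1}\cdots b_{\mu_k}\partial_{\mu_1}\cdots\partial_{\mu_k}=(b\cdot\nabla)^k$ (the $k$-th directional derivative in direction $b$) and differentiating under the expectation, formula \eqref{a03} rewrites compactly as $v^n(t,x)=\sum_{k=0}^{n-1}\tfrac{(1-t)^k}{k!}\,[(b\cdot\nabla)^k(Q_{1-t}g)](x)$. From this, $v^n\in C^\infty([0,1]\times\R^d,\R)$ and $v^n(1,x)=2^{d/2}e^{-\|x\|^2/2}$ are immediate, and \eqref{a19} follows by writing $v^\infty(t,x)=(Q_{1-t}g)(x+b(1-t))$: smoothness and $v^\infty(1,x)=g(x)$ are then clear, and in the PDE the $-b$ arising from differentiating $x+b(1-t)$ in $t$ cancels the transport term $\langle b,\nabla_xv^\infty\rangle$, while $\partial_t(Q_{1-t}g)+\tfrac12\Laplace_x(Q_{1-t}g)=0$ is the backward heat equation satisfied by $Q$.

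For the two recursions in \eqref{a13} I would argue by induction on $n$ from the compact formula. Substituting it, $\langle b,(\nabla_xv^n)(s,y)\rangle=\sum_{k=0}^{n-1}\tfrac{(1-s)^k}{k!}[(b\cdot\nabla)^{k+1}(Q_{1-s}g)](y)$; replacing $y$ by $x+W_s-W_t$, taking expectation, and using that $W_s-W_t$ and $W_1-W_s$ are independent with sum $W_1-W_t$ (together with Fubini) collapses $\E[(b\cdot\nabla)^{k+1}(Q_{1-s}g)(x+W_s-W_t)]$ to the $s$-independent quantity $[(b\cdot\nabla)^{k+1}(Q_{1-t}g)](x)$; the $ds$-integral then only produces $\int_t^1\tfrac{(1-s)^k}{k!}\,ds=\tfrac{(1-t)^{k+1}}{(k+1)!}$, and after re-indexing $j=k+1$ one recovers exactly the compact formula for $v^{n+1}$, i.e.\ the first recursion. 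For the gradient recursion I would differentiate that first recursion in $x$, interchanging $\nabla_x$ with $\E$ and with $\int_t^1(\cdot)\,ds$, and then apply the Gaussian integration-by-parts (Bismut) identity $\nabla_x\E[\phi(x+W_r-W_t)]=\E[\phi(x+W_r-W_t)\tfrac{W_r-W_t}{r-t}]$, valid for $0\le t<r\le 1$, to the boundary term ($r=1$) and to the integrand ($r=s$); here the weight has $L^2$-norm $\sqrt{d/(s-t)}$, which is integrable in $s$ on $[t,1]$, so all interchanges are legitimate by dominated convergence.

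For \eqref{a14} and \eqref{a18} I would evaluate at $(t,x)=(0,0)$. The compact formula gives $v^n(0,0)=\sum_{k=0}^{n-1}\tfrac1{k!}[(b\cdot\nabla)^k(Q_1g)](0)$ with $(Q_1g)(x)=e^{-\|x\|^2/4}$; since $[(b\cdot\nabla)^k(e^{-\|\cdot\|^2/4})](0)=\tfrac{d^k}{dr^k}\big|_{r=0}e^{-r^2\|b\|^2/4}$, only even $k=2i$ contribute, with value $(2i)!\,\tfrac{(-\|b\|^2/4)^i}{i!}$, whence $v^n(0,0)=\sum_{i=0}^{\lfloor(n-1)/2\rfloor}\tfrac{(-1)^i\|b\|^{2i}}{4^ii!}$, which is \eqref{a14}; and $v^\infty(0,0)=(Q_1g)(b)=e^{-\|b\|^2/4}=\sum_{i=0}^\infty\tfrac{(-1)^i\|b\|^{2i}}{4^ii!}$, which is \eqref{a18}. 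Finally \eqref{a21} is an elementary alternating-series estimate: with $a:=\|b\|^2/4$ and $N:=\lfloor\tfrac{n+1}{2}\rfloor=\lfloor\tfrac{n-1}{2}\rfloor+1$ one has $v^\infty(0,0)-v^n(0,0)=\sum_{i=N}^\infty\tfrac{(-a)^i}{i!}$, and the hypothesis $n\ge\tfrac1{2\epsilon}\|b\|^2-1$ forces $N+1>\tfrac{n+1}{2}\ge\tfrac a\epsilon$, hence $\tfrac a{N+j}<\epsilon<1$ for every $j\ge1$, so the tail is a Leibniz alternating series with strictly decreasing absolute terms; the standard Leibniz bounds give $\tfrac{a^N}{N!}\bigl(1-\tfrac a{N+1}\bigr)\le\bigl|v^\infty(0,0)-v^n(0,0)\bigr|\le\tfrac{a^N}{N!}$, and combining with $1-\tfrac a{N+1}\ge1-\epsilon$ and $1\le\tfrac1{1-\epsilon}$ yields exactly the claimed two-sided estimate.

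The main obstacle I anticipate is the analytic bookkeeping in \eqref{a13}: carefully justifying the differentiations under the expectation and under the $ds$-integral, and controlling the integrand near the singular times $s=t$ and $s=1$ so that Fubini and dominated convergence apply. Everything else is explicit Gaussian calculus together with the elementary series estimate above.
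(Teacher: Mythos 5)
Your proposal is correct, and for most of the lemma it follows the same skeleton as the paper: for Item~(ii) both arguments rest on the semigroup/independence-of-increments identity that collapses $\E[(\text{derivative of }Q_{1-s}g)(x+W_s-W_t)]$ to the corresponding derivative of $Q_{1-t}g$ at $x$, on $\int_t^1\frac{(1-s)^k}{k!}\,ds=\frac{(1-t)^{k+1}}{(k+1)!}$, and on the Gaussian integration-by-parts (Stein/Bismut) identity for the gradient recursion; Item~(v) is the same alternating-series estimate with the same use of the hypothesis $n\ge\frac{1}{2\epsilon}\lVert b\rVert^2-1$ to bound the term ratios by $\epsilon$. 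Where you genuinely diverge is in Items~(i) and, especially, (iii). For (i) the paper simply cites the Feynman--Kac formula, whereas you verify the PDE directly from the closed form $v^\infty(t,x)=(Q_{1-t}g)(x+b(1-t))$; this is more self-contained at the cost of a short chain-rule computation. For (iii) the paper evaluates $v^n(0,0)$ via Hermite polynomials (the identity $\frac{d^k}{dx^k}e^{-x^2/2}=e^{-x^2/2}H_k(x)$, Gaussian moment formulas with double factorials, and the multinomial theorem), while you observe that $\sum_{\mu_1,\dots,\mu_k}b_{\mu_1}\cdots b_{\mu_k}\partial_{\mu_1}\cdots\partial_{\mu_k}=(b\cdot\nabla)^k$ and that $[(b\cdot\nabla)^k(Q_1g)](0)=\frac{d^k}{dr^k}\big|_{r=0}e^{-r^2\lVert b\rVert^2/4}$, which can be read off from the Taylor series of $e^{-r^2\lVert b\rVert^2/4}$. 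This one-dimensional reduction is noticeably shorter and eliminates all of the combinatorics; the only price is that you must justify pulling the $x$-derivatives out of the expectation in \eqref{a03} to pass to the compact form $v^n(t,x)=\sum_{k=0}^{n-1}\frac{(1-t)^k}{k!}[(b\cdot\nabla)^k(Q_{1-t}g)](x)$, which is routine since every derivative of the Gaussian is bounded. The analytic bookkeeping you flag for Item~(ii) (domination near $s=t$ via the $L^2$-norm $\sqrt{d/(s-t)}$ of the weight) is exactly the point the paper also has to address, and your treatment of it is adequate.
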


\begin{proof}[Proof of \cref{a09}]
First note that the Feyman-Kac formula (cf., e.g., \cite[Theorem 8.2.1]{Oek03})
proves \eqref{a19}.

Next observe that 
\eqref{a03}
 proves for all $\ell\in\{1,2,\ldots,d\}$, $s\in[0,1)$, $x\in\R^d$, $n\in \N$ that $v^n \in C^{\infty}([0,1]\times\R^d,\R)$, $ v^n(1,x)= 2^{d/2}e^{-\frac{\lVert x\rVert^2}{2}}$,  and
\begin{align}\begin{split}
&
\tfrac{\partial v^n}{\partial x_{\ell}}(s,x)
-
\E\!\left[2^{d/2}\tfrac{\partial}{\partial x_{\ell}}
\exp\! \left(-\tfrac{\lVert x+W_1-W_s\rVert^2}{2}\right)
\right]=
\tfrac{\partial v^n}{\partial x_{\ell}}(s,x)
-
\tfrac{\partial}{\partial x_{\ell}}
\E\!\left[2^{d/2}
\exp\! \left(-\tfrac{\lVert x+W_1-W_s\rVert^2}{2}\right)
\right]
\\&=\sum_{k=1}^{n-1}
\sum_{\mu_1,\mu_2,\ldots,\mu_k=1 }^{d}
\Biggl[
\tfrac{(1-s)^{k}}{k!}
b_{\mu_1}
b_{\mu_2}
\cdots
b_{\mu_k}
\tfrac{\partial}{\partial x_{\ell}}
\E\!\left[2^{d/2}
\tfrac{\partial^k}{\partial x_{\mu_1}\partial x_{\mu_2}\ldots\partial x_{\mu_k}}
\exp\! \left(-\tfrac{\lVert x+W_1-W_s\rVert^2}{2}\right)
\right]\Biggr] \\
&=\sum_{k=1}^{n-1}
\sum_{\mu_1,\mu_2,\ldots,\mu_k=1 }^{d}
\Biggl[
\tfrac{(1-s)^{k}}{k!}
b_{\mu_1}
b_{\mu_2}
\cdots
b_{\mu_k}
\E\!\left[2^{d/2}
\tfrac{\partial^{k+1}}{\partial x_\ell\partial x_{\mu_1}\partial x_{\mu_2}\ldots\partial x_{\mu_k}}
\exp\! \left(-\tfrac{\lVert x+W_1-W_s\rVert^2}{2}\right)
\right]\Biggr].\label{a04}\end{split}
\end{align}
This, the disintegration theorem, and independence of Brownian increments
show for all $t\in[0,1)$, $s\in (t,1)$, $x\in\R^d$, $\ell\in\{1,2,\ldots,d\}$, $n\in\N$
that
\begin{align}
&\E\!\left[\tfrac{\partial v^n}{\partial x_{\ell}}(s,x+W_s-W_t)\right]
-
\E\!\left[2^{d/2}
\tfrac{\partial}{\partial x_\ell}
\exp\! \left(-\tfrac{\lVert x+W_1-W_t\rVert^2}{2}\right)
\right]\nonumber \\
&=\E\!\left[\tfrac{\partial v^n}{\partial x_{\ell}}(s,x+W_s-W_t)\right]
-
\E\!\left[\E\!\left[2^{d/2}
\tfrac{\partial}{\partial x_\ell}
\exp\! \left(-\tfrac{\lVert z+W_1-W_s\rVert^2}{2}\right)\right]\bigr|_{z=x+W_s-W_t}
\right]\nonumber \\
&
=\sum_{k=1}^{n-1}
\sum_{\mu_1,\mu_2,\ldots,\mu_k=1 }^{d}
\Biggl[
\tfrac{(1-s)^{k}}{k!}
b_{\mu_1}
b_{\mu_2}
\cdots
b_{\mu_k}\nonumber \\
&\qquad\qquad\qquad\qquad\qquad\cdot 
\E \biggl[\E\!\left[2^{d/2}
\tfrac{\partial^{k+1}}{\partial z_\ell\partial z_{\mu_1}\partial z_{\mu_2}\ldots\partial z_{\mu_k}}
\exp\! \left(-\tfrac{\lVert z+W_1-W_s\rVert^2}{2}\right)
\right]\bigr|_{z= x+W_s-W_t}\biggr]\Biggr]\nonumber \\
&=
\sum_{k=1}^{n-1}
\sum_{\mu_1,\mu_2,\ldots,\mu_k=1 }^{d}
\Biggl[
\tfrac{(1-s)^{k}}{k!}
b_{\mu_1}
b_{\mu_2}
\cdots
b_{\mu_k}
\E\!\left[2^{d/2}
\tfrac{\partial^{k+1}}{\partial x_{\mu_1}\partial x_{\mu_2}\ldots\partial x_{\mu_k}\partial x_\ell}
\exp\! \left(-\tfrac{\lVert x+W_1-W_t\rVert^2}{2}\right)
\right]\Biggr].\label{a11}
\end{align}
This, 
the fact that
$\forall\,k\in\N_0,t\in [0,1]\colon \int_{t}^{1}\frac{(1-s)^k}{k!}\,ds= \frac{(1-t)^{k+1}}{(k+1)!}$, and \eqref{a03} show for all
$t\in[0,1)$, $x\in\R^d$, $n\in\N$ that
\begin{align}
&\int_t^1\E\!\left[
\Bigl\langle b,
 (\nabla_x v^n)(s,x+W_s-W_t)\Bigr\rangle\right]\!ds=
\sum_{\mu_{k+1}=1}^{d}
\int_t^1\E\!\left[b_{\mu_{k+1}}\tfrac{\partial v^n}{\partial x_{\mu_{k+1}}}(s,x+W_s-W_t)\right]\!ds\nonumber \\
&=\sum_{k=0}^{n-1}
\sum_{\mu_1,\mu_2,\ldots,\mu_{k+1}=1 }^{d}
\int_{t}^{1}\tfrac{(1-s)^{k}}{k!}
b_{\mu_1}
b_{\mu_2}
\cdots
b_{\mu_{k+1}}
\E\!\left[2^{d/2}
\tfrac{\partial^{k+1}}{\partial x_{\mu_1}\partial x_{\mu_2}\ldots\partial x_{\mu_{k+1}}}
\exp\! \left(-\tfrac{\lVert x+W_1-W_t\rVert^2}{2}\right)
\right]\!ds\nonumber \\
&=\sum_{k=0}^{n-1}
\sum_{\mu_1,\mu_2,\ldots,\mu_{k+1}=1 }^{d}
\Biggl[\tfrac{(1-t)^{k+1}}{(k+1)!}
b_{\mu_1}
b_{\mu_2}
\cdots
b_{\mu_{k+1}}
\E\!\left[2^{d/2}
\tfrac{\partial^{k+1}}{\partial x_{\mu_1}\partial x_{\mu_2}\ldots\partial x_{\mu_{k+1}}}
\exp\! \left(-\tfrac{\lVert x+W_1-W_t\rVert^2}{2}\right)
\right]\Biggr]\nonumber \\
&=\sum_{k=1}^{n}
\sum_{\mu_1,\mu_2,\ldots,\mu_{k}=1 }^{d}
\Biggl[\tfrac{(1-t)^{k}}{k!}
b_{\mu_1}
b_{\mu_2}
\cdots
b_{\mu_{k}}
\E\!\left[2^{d/2}
\tfrac{\partial^{k}}{\partial x_{\mu_1}\partial x_{\mu_2}\ldots\partial x_{\mu_{k}}}
\exp\! \left(-\tfrac{\lVert x+W_1-W_t\rVert^2}{2}\right)
\right]\Biggr]\nonumber \\
&=  v^{n+1}(t,x)-\E\Bigl[2^{d/2}\exp \!\left(-\tfrac{\lVert x+W_1-W_t\rVert^2}{2}\right)\Bigr].\label{a12}
\end{align}
This, the fact that
$\forall\,t\in[0,1],x\in\R^d\colon v^0(t,x)= 0 $,
 and \eqref{a03}
show for all
$t\in[0,1)$, $x\in\R^d$, $n\in\N_0$ that
\begin{align}
v^{n+1}(t,x)=\E\Bigl[2^{d/2}\exp\!\left(-\tfrac{\lVert x+W_1-W_t\rVert^2}{2}\right)\Bigr]+
\int_t^1\E\!\left[
\Bigl\langle b,
( \nabla_xv^n)(s,x+W_s-W_t)\Bigr\rangle\right]\!ds.\label{a05}
\end{align}
Next note that
 Stein's lemma proves  for all 
$\ell\in \{1,2,\ldots,d\}$,
$x\in\R^d$, $s\in (0,1]$, $t\in[0,s)$, $h\in C^1(\R^d,\R)$ with
$\sup_{y\in\R^d} \bigl(\lvert h(y)\rvert+\lvert \tfrac{\partial h}{\partial y_\ell}(y)\rvert\bigr)<\infty$ 
 that
\begin{align}
&\frac{\partial }{\partial x_\ell}
\E\!\left [h(x+W_s-W_t)
\right]=\E\!\left [
\frac{\partial h}{\partial x_\ell}
(x+W_s-W_t)
\right]=
\E\!\left [
h(x+W_s-W_t) \frac{W^\ell_s-W^\ell_t}{s-t}
\right].
\end{align}
This, \eqref{a05}, differentiation under integrals,
the fact that
$\forall\, \ell\in \{1,2,\ldots, d\},n\in \N_0,s\in [0,1)\colon\sup_{x\in\R^d} \bigl[
\exp(-\frac{\lVert x\rVert^2}{2})+
\lvert  
\frac{\partial}{\partial x_{\ell}}
\exp (-\frac{\lVert x\rVert^2}{2})\rvert
+
\lvert \langle
b,(\nabla_x v^n)(s,x)
\rangle\rvert
+\lvert\frac{\partial}{\partial x_\ell} \langle
b,(\nabla_x v^n)(s,x)
\rangle\rvert\bigr]<\infty
$,
and the fact that $\forall\,t\in[0,1],x\in\R^d\colon v^0(t,x)=0$
show for all 
$\ell\in \{1,2,\ldots,d\}$, $n\in \N_0$, $t\in[0,1)$, $x\in\R^d$ that
\begin{align}
&\frac{\partial v^{n+1}}{\partial x_\ell}(t,x)=\frac{\partial}{\partial x_\ell}
\E\!\left[2^{d/2}\exp\!\left(-\tfrac{\lVert x+W_1-W_t\rVert^2}{2}\right)\right]+
\int_t^1\frac{\partial}{\partial x_\ell} \E\!\left[
\Bigl\langle b,(\nabla_x v^n)
(s,x+W_s-W_t)\Bigr\rangle\right]\!ds\nonumber \\
&=
\E\!\left[2^{d/2}\exp \!\left(-\tfrac{\lVert x+W_1-W_t\rVert^2}{2}\right)\frac{W_1^\ell-W_t^\ell}{1-t}\right]\nonumber \\
&\qquad\qquad\qquad+
\int_t^1\E\!\left[
\Bigl\langle b,
( \nabla_x v^n) (s,x+W_s-W_t)\Bigr\rangle\frac{W^\ell_s-W^\ell_t}{s-t}\right]\!ds.
\end{align}
This and 
\eqref{a05} show \eqref{a13}. 

For the next step let $0^0=1$,
let  $H_k\colon \R \to \R$, $k\in \N_0$, satisfy for all $k\in\N_0$, $x\in\R$ that 
\begin{equation}\label{a02}
H_k(x)= \sum_{\ell =0}^{\lfloor \frac{k}{2}\rfloor}
\left[\frac{k!(-1)^\ell }{\ell !(k-2\ell)!}\frac{x^{k-2\ell}}{2^\ell }\right]
\end{equation}
and
for every $n\in \N_0\cup\{-1\}$ let $n!!\in\N$ satisfy that
$ n!!=\prod _{k=0}^{\left\lceil { {n}/{2}}\right\rceil -1}(n-2k)$.
A well-known fact on Hermite polynomials shows for all
$x\in\R$, $k\in\N_0$ that
$
\tfrac{d^k}{dx^k}(e^{-\frac{x^2}{2}})= 
e^{-\frac{x^2}{2}} H_k(x).
$ Furthermore, a well-known fact  on 
moments of
normally distributed random variables 
shows
for all 
$k\in \N_0$, $\ell\in [0,k]\cap\N_0$
that
$
 \tfrac{1}{\sqrt{\pi}}\int_{\R}
z^{2k+1-2\ell}e^{-z^2}\,dz= 0
$ and \begin{align}
 \frac{1}{\sqrt{\pi}}\int_{\R}
z^{2k-2\ell}e^{-z^2}\,dz= \left[ \frac{1}{\sqrt{2\pi\sigma^2}}\int_{\R}
z^{2k-2\ell}e^{-\frac{z^2}{2\sigma^2}}\,dz\right]\Bigr|_{\sigma^2=\frac{1}{2}}
=\frac{(2k-2\ell-1)!!}{2^{k-\ell}}.
\end{align}
This, \eqref{a02}, and the binomial theorem imply
for all  $k\in \N_0$ that
\begin{align}
&
\E\! \left[\sqrt{2}\exp \!\left(-\frac{\lvert W_1^1\rvert^2}{2}\right)
H_{2k}(W_1^1)
\right]= \frac{1}{\sqrt{2\pi}}\int_{\R}\sqrt{2}e^{-\frac{z^2}{2}}
\left[\sum_{\ell=0}^{k}\left(\frac{(2k)!(-1)^\ell}{\ell!(2k-2\ell)!}\frac{z^{2k-2\ell}}{2^\ell}\right)\right]e^{-\frac{z^2}{2}}\,dz\nonumber \\
&=\sum_{\ell=0}^{k}\left[\frac{(2k)!(-1)^\ell}{\ell!(2k-2\ell)!2^\ell}
\left(
 \frac{1}{\sqrt{\pi}}\int_{\R}
z^{2k-2\ell}e^{-z^2}\,dz\right)\right]
=
\sum_{\ell=0}^{k}\left[
\frac{(2k)!(-1)^\ell}{\ell!(2k-2\ell)!2^\ell}
\frac{(2k-2\ell-1)!!}{2^{k-\ell}}\right]
\nonumber \\
&=
\sum_{\ell=0}^{k}\frac{(2k)!(-1)^\ell(2k-2\ell-1)!!}{\ell!(2k-2\ell)!2^k}\nonumber 
=\sum_{\ell=0}^{k}\frac{(2k)!(-1)^\ell}{\ell!(2k-2\ell)!!2^k}\\
&
=
\sum_{\ell=0}^{k}\frac{(2k)!(-1)^\ell}{\ell!(k-\ell)!2^{k-\ell}2^k}=\frac{(2k)!}{4^kk!}\sum_{\ell=0}^{k}\frac{k!(-2)^\ell}{(k-\ell)!\ell!}=\frac{(2k)!}{4^kk!}(-1)^k 
\label{a07}\end{align}
and
\begin{align} \label{a08}\begin{split}
&
\E\! \left[\sqrt{2}\exp \!\left(-\frac{\lvert W_1^1\rvert^2}{2}\right)
H_{2k+1}(W_1^1)
\right]\\&= \frac{1}{\sqrt{2\pi}}\int_{\R}\sqrt{2}e^{-\frac{z^2}{2}}
\left[\sum_{\ell=0}^{\lfloor (2k+1)/{2}\rfloor}\left(\frac{(2k+1)!(-1)^\ell}{\ell!(2k+1-2\ell)!}\frac{z^{2k+1-2\ell}}{2^\ell}\right)\right]e^{-\frac{z^2}{2}}\,dz=0.
\end{split}\end{align}
Thus, it holds for all $k\in \N_0$ that
\begin{equation}\label{a08b}
\E\! \left[\sqrt{2}\exp\!\left(-\frac{\lvert W_1^1\rvert^2}{2}\right)
H_{k}(W_1^1)
\right]=\1_{2\N_0}(k)\frac{k!(-1)^{\lfloor k/2\rfloor}}{4^{k/2} \lfloor k/2\rfloor!  }.
\end{equation}
Furthermore, the combinatorial interpretation of multinomial coefficients
yields for all $k\in\N$, $\alpha\in (\N_0)^k$ with $\lvert \alpha\rvert=k$
that
\begin{align}
\begin{split}
&\#\left[\bigcap_{i=1}^d\Bigl\{ (\mu_1,\mu_2,\ldots,\mu_k)\in \{1,2,\ldots,d\}^k\colon  \# \{\ell\in \{1,2,\ldots,k\}\colon \mu_\ell=i\}=\alpha_i
\Bigr\}\right]\\
&=\frac{k!}{\alpha_1!\alpha_2!\cdots\alpha_d!}.
\end{split}\end{align}
This,  the fact that $W^1,W^2,\ldots,W^d$ are independent,
\eqref{a03},   and
the fact that
$\forall\, x\in\R, k\in\N_0\colon
\frac{d^k}{dx^k}(e^{-\frac{x^2}{2}})= 
e^{-\frac{x^2}{2}} H_k(x)
$
 show for all $n\in \N$ that
\begin{align}
&v^n(0,0)-\left[
\prod_{\ell=1}^{d}
\E\!\left[\sqrt{2}
\exp \!\left(-\frac{(W_1^\ell)^2}{2}\right)
\right]\right]=
v^n(0,0)-
\E\!\left[2^{d/2}
\exp\! \left(-\frac{\lVert W_1\rVert^2}{2}\right)
\right]
\nonumber \\
&=
\sum_{k=1}^{n-1}
\sum_{\mu_1,\mu_2,\ldots,\mu_k=1 }^{d}
\Biggl[
\frac{1}{k!}
b_{\mu_1}
b_{\mu_2}
\cdots
b_{\mu_k}
\E\!\left[2^{d/2}
\frac{\partial^k}{\partial x_{\mu_1}\partial x_{\mu_2}\ldots\partial x_{\mu_k}}
\exp\! \left(-\frac{\lVert x+W_1\rVert^2}{2}\right)
\right]\Bigr|_{x=0}\Biggr] \nonumber  \\
&
=
\sum_{k=1}^{n-1}
\sum_{\alpha\in (\N_0)^d\colon \lvert \alpha \rvert=k}\left[
\prod_{\ell=1}^{d}
\E\!\left[\frac{b_{\ell}^{\alpha_{\ell}}\sqrt{2}}{\alpha_\ell!}
\frac{\partial^{\alpha_{\ell}}}{\partial x_\ell^{\alpha_{\ell}}}
\exp\! \left(-\frac{(x_\ell+W_1^\ell)^2}{2}\right)
\right]\Bigr|_{x_\ell=0}\right] \nonumber \\
&=\sum_{k=1}^{n-1}
\sum_{\alpha\in (\N_0)^d\colon \lvert \alpha \rvert=k}\left[
\prod_{\ell=1}^{d}
\E\!\left[\frac{b_{\ell}^{\alpha_{\ell}}\sqrt{2}}{\alpha_\ell!}
\exp \!\left(-\frac{(W_1^\ell)^2}{2}\right)
H_{\alpha_\ell}(W_1^\ell)
\right]\right] .\label{m01}
\end{align}
This,
\eqref{a08b}, and the multinomial theorem 
 show for all $n\in \N$ that
\begin{align}
&v^n(0,0)=\sum_{k=0}^{n-1}
\sum_{\alpha\in (\N_0)^d\colon \lvert \alpha \rvert=k}
\prod_{\ell=1}^{d}\left[
\frac{b_{\ell}^{\alpha_{\ell}}}{\alpha_\ell!}
\1_{2\N_0}(\alpha_\ell)
\frac{\alpha_\ell!(-1)^{\lfloor \alpha_\ell/2\rfloor} }{4^{\alpha_\ell/2} \lfloor \alpha_\ell/2\rfloor! }\right]\nonumber \\
&=
\sum_{i=0}^{\lfloor\frac{n-1}{2}\rfloor}
\sum_{\beta\in (\N_0)^d\colon \lvert \beta\rvert=i}
\prod_{\ell=1}^{d}
\frac{b_{\ell}^{2\beta_{\ell}}(-1)^{ \beta_\ell} }{4^{\beta_\ell}  \beta_\ell! } =\sum_{i=0}^{\lfloor\frac{n-1}{2}\rfloor}\left[
\frac{(-1)^i }{4^i}
\sum_{\beta\in (\N_0)^d\colon \lvert \beta\rvert=i}
\prod_{\ell=1}^{d}
\frac{b_{\ell}^{2\beta_{\ell}}}{ \beta_\ell!  }
\right]\nonumber \\
&
= \sum_{i=0}^{\lfloor\frac{n-1}{2}\rfloor}\left[
\frac{(-1)^i }{4^ii!}
\sum_{\beta\in (\N_0)^d\colon \lvert \beta\rvert=i}\left[
\frac{i!b_1^{2\beta_1}b_2^{2\beta_2}\ldots b_d^{2\beta_d}}{\beta_1!\beta_2!\cdots\beta_d!}\right]\right] =
 \sum_{i=0}^{\lfloor\frac{n-1}{2}\rfloor}
\frac{(-1)^i \lVert b\rVert^{2i}}{4^ii!}
. \label{m02}
\end{align}
This establishes \eqref{a14}.

Next observe that 
for all $a\in\R$, $\ell\in \{1,2,\ldots,d\}$ it holds that
\begin{equation}\begin{split}
&\E\!\left[ \exp \!\left(-\frac{( a+W^\ell_1)^2}{2}\right)\right]
=
\frac{1}{\sqrt{2\pi}}\int_{\R} e^{-\frac{(a+z)^2}{2}}e^{-\frac{z^2}{2}}\,dz
=\frac{1}{\sqrt{2\pi}}\int_{\R} e^{-\frac{a^2}{2}}
e^{-az}
e^{-z^2}\,dz\\
&
=
\frac{1}{\sqrt{2\pi}}\int_{\R} e^{-\frac{a^2}{2}+\frac{1}{4}a^2}
e^{ -(z+\frac{1}{2}a)^2}
\,dz
=\frac{1}{\sqrt{2\pi}}\int_{\R}e^{-\frac{a^2}{4}}e^{-\frac{y^2}{2}}\frac{1}{\sqrt{2}}\,dy= \frac{e^{-\frac{a^2}{4}}}{\sqrt{2}}.\end{split}
\end{equation}
This, \eqref{a16}, and the fact that $W^1,W^2,\ldots,W^d$ are independent 
 prove that
\begin{equation}
v^\infty(0,0)=\E\!\left[2^{d/2}\exp\! \left(-\frac{\lVert b+W_1\rVert^2}{2}\right)\right]= 2^{d/2}\prod_{\ell=1}^{d}\E\!\left[ \exp\!\left(-\frac{( b_\ell+W^\ell_1)^2}{2}\right)\right]=e^{-\frac{\lVert b \rVert^2}{4}} 
.
\end{equation}
This shows \eqref{a18}. 

Next note that \eqref{a14}, \eqref{a18}, and the fact that
$\forall\,x\in\R\colon e^{-\frac{x^2}{4}}=1+\sum_{i=1}^\infty \frac{(-1)^i x^{2i}}{4^ii!}$
 show  for all $n\in\N$ that
\begin{align}
 v^\infty(0,0)- v^n(0,0)=\sum^ {\infty}_{i=\lfloor \frac{n-1}{2}\rfloor+1}\frac{(-1)^{i}\lVert b\rVert^{2i}}{i! 4^{i}}=
\sum^ {\infty}_{i=\lfloor \frac{n+1}{2}\rfloor}\frac{(-1)^{i}\lVert b\rVert^{2i}}{ i! 4^{i}}
.\label{m03}
\end{align}
Therefore,
for all $\epsilon\in (0,1)$,
$n\in\N\cap\bigl[\frac{1}{2\epsilon}\lVert b\rVert^2-1,\infty\bigr)$, $j\in \N$ with 
$j
=\lfloor \tfrac{n+1}{2}\rfloor
$
it holds that 
$\frac{1}{2\epsilon}\lVert b\rVert^2\leq n+1=2\frac{n+1}{2}
\leq 2(\lfloor \frac{n+1}{2}\rfloor +1)
= 2(j+1)$, 
$\frac{\lVert b\rVert^2}{4(j+1)}\leq \epsilon $,
 \begin{align}
&
( v^\infty(0,0) -v^n(0,0) ) (-1)^j= 
\left(\frac{\lVert b\rVert^{2j}}{j!4^j}- \frac{\lVert b \rVert^{2(j+1)}}{(j+1)!4^{j+1}}\right)
+ \left(\frac{\lVert b\rVert^{2(j+2)}}{(j+2)!4^{j+2}}- \frac{\lVert b \rVert^{2(j+3)}}{(j+3)!4^{j+3}}\right)+\ldots\nonumber \\
&=  \frac{\lVert b\rVert^{2j}}{j!4^j}\left(1- \frac{\lVert b\rVert^2}{4(j+1)}\right)
+
\frac{\lVert b\rVert^{2(j+2)}}{(j+2)!4^{j+2}}
\left(1- \frac{\lVert b\rVert^2}{4(j+3)}\right)
+\ldots\nonumber \\
&
\geq \frac{\lVert b\rVert^{2j}}{j!4^j}\left(1- \frac{\lVert b\rVert^2}{4(j+1)}\right)
\geq 
\left(\frac{\lVert b \rVert^2}{4}\right)^{j}\frac{1}{j!}(1-\epsilon),
\end{align}
and
\begin{align}\begin{split}
&\lvert
 v^\infty (0,0)- v^n(0,0)\rvert\\
&\leq 
\sum^ {\infty}_{i=j}\frac{\lVert b\rVert^{2i}}{i! 4^{i}}\leq 
\frac{\lVert b\rVert^{2j}}{j! 4^{j}}\left(
1+\sum_{i=j+1}^{\infty}\left[\frac{\lVert b\rVert^{2}}{4(j+1)}\right]^{^{i-j}}
\right)\leq \frac{\lVert b\rVert^{2j}}{j! 4^{j}}\sum_{\ell=0}^{\infty} \epsilon^\ell
=\left(\frac{\lVert b \rVert^2}{4}\right)^{j}\frac{1}{j!}\frac{1}{1-\epsilon}.\end{split}
\end{align}
This  establishes \eqref{a21}.
The proof of \cref{a09} is thus completed.
\end{proof}
\subsection{Lower bounds for the convergence speed of Picard iterations for an example BSDE}\label{a09+1}
\begin{corollary}\label{a10}
Let 
$d\in\N$, $b\in \R^d$, 
let $\langle \cdot,\cdot\rangle \colon\R^d\times\R^d\to\R$ denote the standard scalar product on $\R^d$, let 
 $\lVert  \cdot\rVert \colon \R^d\to[0,\infty)$   denote the standard norm  on $\R^d$,
let $(\Omega,\mathcal{F},\P, (\F_t)_{t\in[0,1]})$ be a filtered probability space which satisfies the usual conditions,
let $W\colon [0,1]\times \Omega \to \R^d$ be a standard 
$(\F_t)_{t\in[0,1]}$-Brownian motion with continuous sample paths,
let  $Y^n\colon[0,1]\times\Omega\to\R$, $n\in \N_0\cup\{\infty\}$, be adapted
   with continuous sample paths,
let
$ Z^n\colon[0,1]\times\Omega\to\R^{d}$, $n\in\N_0\cup\{\infty\}$,
   be progressively
   measurable,
 and assume for all $s\in [0,1]$, $n\in\N\cup\{\infty\}$ that
 a.s.\ it holds that
$Y^0_s=0$,  $Z^0_s=0$, 
$\int_{0}^{T}\E \bigl[ \lVert Z^n_t\rVert^2\bigr]\, dt <\infty$ and
\begin{align}\label{t03}\begin{split}
Y_s^{n+1}= 2^{d/2}e^{-\frac{\lVert W_1\rVert^2}{2}}+ \int_{s}^1 \langle b, Z^n_t\rangle\,dt-\int_s^1 \langle Z^{n+1}_t,dW_t\rangle.
\end{split}\end{align}
Then for all $n\in\N\cap\bigl[\lVert b\rVert^2-1,\infty\bigr)$ 
it holds  a.s.\ that
 $\left\lvert Y_0^{\infty}-Y^n_0\right\rvert\geq\frac{1}{2}
\left(\frac{\lVert b \rVert^2}{4}\right)^{\lfloor\frac{n+1}{2}\rfloor}
\frac{1}{\sqrt{n!}}$.
\end{corollary}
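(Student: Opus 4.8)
The plan is to reduce \cref{a10} to \cref{a09} by identifying the Picard iterates $(Y^n,Z^n)$ of the BSDE~\eqref{t03} with the Picard iterates $v^n$ of the associated linear PDE from \cref{a09} evaluated along the Brownian path, and then inserting the lower bound from Item~\eqref{a21} of \cref{a09}.

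First, I would prove by induction on $n\in\N_0$ that a.s.\ it holds for all $s\in[0,1]$ that $Y^n_s=v^n(s,W_s)$ and that for $ds\otimes\P$-a.e.\ $(s,\omega)$ it holds that $Z^n_s=(\nabla_x v^n)(s,W_s)$. The base case $n=0$ is immediate since $v^0\equiv 0$ and $Y^0=Z^0=0$. For the induction step, taking conditional expectations given $\F_s$ in~\eqref{t03} (the It\^o integral therein has vanishing $\F_s$-conditional expectation because $Z^{n+1}$ is square-integrable) and applying Fubini shows that $Y^{n+1}_s=\E[2^{d/2}e^{-\lVert W_1\rVert^2/2}\mid\F_s]+\int_s^1\E[\langle b,Z^n_t\rangle\mid\F_s]\,dt$. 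Plugging in the induction hypothesis $Z^n_t=(\nabla_x v^n)(t,W_t)$, using the stationarity and independence of the increments of $W$ (so that $\E[g(W_t)\mid\F_s]=\E[g(x+W_t-W_s)]|_{x=W_s}$ for $s<t$), and comparing with the representation formula for $v^{n+1}$ in Item~\eqref{a13} of \cref{a09} yields $Y^{n+1}_s=v^{n+1}(s,W_s)$. Since Item~\eqref{a13} moreover gives $v^{n+1}\in C^\infty([0,1]\times\R^d,\R)$, and since $v^{n+1}$ and $\nabla_x v^{n+1}$ are bounded (being finite combinations of bounded Gaussian-type functions), It\^o's formula applies to $s\mapsto v^{n+1}(s,W_s)$; identifying the local-martingale part of the resulting semimartingale with the Brownian-integral part in~\eqref{t03} via uniqueness of the semimartingale decomposition gives $Z^{n+1}_s=(\nabla_x v^{n+1})(s,W_s)$ for a.e.\ $(s,\omega)$, completing the induction. (The identity for $Z^{n+1}$ may alternatively be read off from the Bismut--Elworthy--Li-type formula for $\nabla_x v^{n+1}$ in Item~\eqref{a13} via the same disintegration argument.)

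Next, I would identify the fixed point: by Item~\eqref{a19} of \cref{a09} and It\^o's formula, the pair $(v^\infty(\cdot,W_\cdot),(\nabla_x v^\infty)(\cdot,W_\cdot))$ is a square-integrable solution of the fixed-point BSDE with terminal value $2^{d/2}e^{-\lVert W_1\rVert^2/2}$ and globally Lipschitz driver $(t,y,z)\mapsto\langle b,z\rangle$, and hence, by the classical uniqueness of square-integrable solutions of Lipschitz BSDEs, $Y^\infty_s=v^\infty(s,W_s)$ a.s.\ (alternatively, $Y^\infty_0$ can be computed directly via Girsanov's theorem and a Gaussian integral). Since $Y^\infty_0$ and $Y^n_0$ are $\F_0$-measurable, hence a.s.\ constant, the preceding step gives that a.s.\ $\lvert Y^\infty_0-Y^n_0\rvert=\lvert v^\infty(0,0)-v^n(0,0)\rvert$. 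Applying Item~\eqref{a21} of \cref{a09} with $\epsilon=\tfrac12$, for which $\tfrac{1}{2\epsilon}\lVert b\rVert^2-1=\lVert b\rVert^2-1$, then shows for all $n\in\N\cap[\lVert b\rVert^2-1,\infty)$ that $\lvert v^\infty(0,0)-v^n(0,0)\rvert\geq\tfrac12\bigl(\tfrac{\lVert b\rVert^2}{4}\bigr)^{\lfloor(n+1)/2\rfloor}\tfrac{1}{\lfloor(n+1)/2\rfloor!}$. Combining this with the elementary inequality $\lfloor(n+1)/2\rfloor!\leq\sqrt{n!}$, valid for every $n\in\N$ (writing $n=2j$ this reduces to $(j!)^2\leq(2j)!$, which is clear; writing $n=2j-1$ it reduces to $(j!)^2\leq(2j-1)!$, equivalently $\binom{2j}{j}\geq 2j$), then yields $\lvert Y^\infty_0-Y^n_0\rvert\geq\tfrac12\bigl(\tfrac{\lVert b\rVert^2}{4}\bigr)^{\lfloor(n+1)/2\rfloor}\tfrac{1}{\sqrt{n!}}$, as claimed.

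The step requiring the most care is the induction: one must combine the Markov property of Brownian motion with the recursive PDE identities of \cref{a09}, check the integrability needed for the conditional-expectation and It\^o manipulations (here trivial, since $v^n$ and $\nabla_x v^n$ are bounded), and invoke uniqueness both for the Picard recursion and for the fixed-point BSDE. Everything afterwards is routine bookkeeping together with the elementary factorial estimate.
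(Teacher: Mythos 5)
Your proposal is correct and follows essentially the same route as the paper's proof: identify $Y^n_s=v^n(s,W_s)$ and $Z^n_s=(\nabla_x v^n)(s,W_s)$ by induction using the Markov property, the recursion of Item~\eqref{a13}, It\^o's formula and uniqueness of the semimartingale decomposition, identify $(Y^\infty,Z^\infty)$ with $(v^\infty(\cdot,W_\cdot),(\nabla_x v^\infty)(\cdot,W_\cdot))$ via BSDE uniqueness, and conclude from Item~\eqref{a21} with $\epsilon=\tfrac12$ together with $\lfloor\tfrac{n+1}{2}\rfloor!\leq\sqrt{n!}$. (The aside that $\F_0$-measurability forces a.s.\ constancy is unnecessary and not justified by the stated assumptions, but the needed identity $Y^n_0=v^n(0,0)$ a.s.\ already follows from the identification at $s=0$ with $W_0=0$.)
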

\begin{proof}[Proof of \cref{a10}]Throughout this proof let
$v^n\colon [0,1]\times \R^d\to \R$, $n\in\N_0\cup\{\infty\}$, satisfy for all
$t\in[0,1 ]$,
$x\in\R^d$, $n\in\N$ that
$v^0(t,x)=0 $,
\begin{align}
v^\infty(s,x)=\E\!\left[2^{d/2}\exp \!\left(-\frac{\lVert x+b(1-s)+W_1-W_s\rVert^2}{2}\right)\right], \label{t05}
\end{align}
and
\begin{align}
\begin{split}
&v^n(s,x)=
\E\!\left[2^{d/2}
\exp\! \left(-\frac{\lVert x+W_1-W_s\rVert^2}{2}\right)
\right]
\\
&+
\sum_{k=1}^{n-1}
\sum_{\mu_1,\mu_2,\ldots,\mu_k=1 }^{d}
\Biggl[
\frac{(1-s)^{k}}{k!}
b_{\mu_1}
b_{\mu_2}
\cdots
b_{\mu_k}
\E\!\left[2^{d/2}
\frac{\partial^k}{\partial x_{\mu_1}\partial x_{\mu_2}\ldots\partial x_{\mu_k}}
\exp\! \left(-\frac{\lVert x+W_1-W_s\rVert^2}{2}\right)
\right]\Biggr].\label{a03b}
\end{split}
\end{align}
Then \cref{a09} proves
\begin{enumerate}[a)]
\item for all $t\in[0,1] $, $x\in\R^d$
that
$v^\infty\in C^{\infty}([0,1]\times\R^d,\R)$ and 
\begin{equation}
\frac{\partial v^{\infty}}{\partial t}(t,x)+
\frac{1}{2}(\Laplace_x v^\infty )(t,x)
+
\left\langle b,(\nabla_x v^\infty)(t,x)
\right\rangle=0,
\end{equation}
\item for all $s\in[0,1]$,
$x\in\R^d$, 
$n\in\N_0$ that $v^n\in C^{\infty}([0,1]\times\R^d,\R)$ and
\begin{equation}
v^{n+1}(s,x)=\E\!\left[2^{d/2}\exp\!\left(-\tfrac{\lVert x+W_1-W_s\rVert^2}{2}\right)\right]+
\int_s^1\E\!\left[
\Bigl\langle b,
( \nabla_xv^n)(t,x+W_t-W_s)\Bigr\rangle\right]\!dt,\label{t07a}
\end{equation}
and 
\item for all $n\in\N\cap\bigl[\lVert b\rVert^2-1,\infty\bigr)$
that
\begin{equation}
\lvert v^n(0,0)-v^\infty(0,0)\rvert
\geq \frac{1}{2}
\left(\frac{\lVert b \rVert^2}{4}\right)^{\lfloor\frac{n+1}{2}\rfloor}\frac{1}{\lfloor\frac{n+1}{2}\rfloor!}.\label{t07}
\end{equation}
\end{enumerate}
This and It\^o's formula
prove that for all $s\in [0,1]$   it holds a.s.\ that
\begin{align}\begin{split}
&
2^{d/2} e^{-\frac{\lVert W_1\rVert^2}{2}}-v^\infty (s,W_s)=v^{\infty}(1,W_1)-v^{\infty}(s,W_s)\\
&= \int_{s}^{1} \left(\frac{\partial v^{\infty}}{\partial t}+
\frac{1}{2}\Laplace_x v^\infty\right)\!(t,W_t)\,dt
+\int_{s}^{1} \left\langle (\nabla_x v^{\infty})(t,W_t), dW_t\right\rangle
\\
&=-\int_{s}^{1} \left\langle b,(\nabla_x v^{\infty}) (t,W_t)\right\rangle dt
+\int_{s}^{1} \left\langle (\nabla_x v^{\infty}) (t,W_t), dW_t\right\rangle.
\end{split}
\end{align}
This, \eqref{t03}, and 
a standard result on
uniqueness of backward stochastic differential equations
(cf., e.g., \cite[Theorem~4.3.1]{Zha17})
prove 
for all
$s\in [0,1]$
that
$\P\bigl(  Y^{\infty}_s= v^{\infty}(s,W_s)\bigr)=1$
and 
$\P\bigl(  Z^{\infty}_s= (\nabla_xv^{\infty})(s,W_s)\bigr)=1$.

Next, we prove by induction on  $n\in\N_0$ that for all $ n\in\N_0$, $s\in [0,1]$ it holds that
$\P\bigl(  Y^{n}_s= v^{n}(s,W_s)\bigr)=1$
and 
$\P\bigl(  Z^{n}_s= (\nabla_xv^{n})(s,W_s)\bigr)=1$.
 First, 
the fact that  $\forall \,t\in[0,1],x\in\R^d\colon v^0(t,x)=0 $ and
the fact that 
$\forall\, s \in [0,1]\colon\P \bigl( (Y^0_s,  Z^0_s)=(0,0)\bigr)=1 $
 establish the base case $n=0$. For the induction step
$\N_0\ni n\mapsto n+1\in\N$ let $n\in\N_0$ satisfy for all
$s\in [0,1]$
 that
$\P\bigl(  Y^{n}_s= v^{n}(s,W_s)\bigr)=1$
and 
$\P\bigl(  Z^{n}_s= (\nabla_xv^{n})(s,W_s)\bigr)=1$.
This,  \eqref{t07a}, the Markov property of $W$, 
the fact that for all
$s\in [0,1]$
it holds a.s.\ that $
\E\bigl[\int_s^1 \langle
{Z}^{n+1}_t,dW_t\rangle|\F_s\bigr]=0$,
\eqref{t03}, and adaptedness of
$Y^{n+1}$ 
imply that for all $s\in[0,1]$ it holds a.s.\  that
\begin{equation}\begin{split}
&v^{n+1}(s,W_s)=\E\Bigl[2^{d/2}e^{-\frac{\lVert W_1\rVert^2}{2}}\big|\F_s\Bigr]+
\int_s^1\E\!\left[
\Bigl\langle b,
( \nabla_xv^n)(t,W_t)\Bigr\rangle\big|\F_s\right]\!dt\\
&= 
\E\!\left[2^{d/2}e^{-\frac{\lVert W_1\rVert^2}{2}}\big|\F_s\right] + 
\int_{s}^{1}\E\bigl[ \langle b,{Z}^n_t\rangle \big|\F_s\bigr]dt\\
&
=\E\!\left[2^{d/2}e^{-\frac{\lVert W_1\rVert^2}{2}} + \int_{s}^{1} \langle b,Z^n_t\rangle\,dt+
\int_s^1 \langle {Z}^{n+1}_t,dW_t\rangle
\middle|\F_s\right]= \E \!\left[Y_{s}^{n+1}\middle|\F_s\right]=Y_{s}^{n+1}.\end{split}\label{t08}
\end{equation}
This, 
It\^o's formula, 
the fact that
$v^{n+1}\in C^{2}([0,1]\times\R^d,\R)$,
\eqref{a03b},
 and
\eqref{t03} show that for all
$s\in[0,1]$ it holds a.s. that
\begin{align}\label{a20}\begin{split}
&
0= v^{n+1}(s,W_s) - Y_s^{n+1}
= - \int_{s}^{1}
\left[\left(  \frac{\partial v^{n+1}}{\partial t}
+\frac{1}{2}\Laplace_x v^{n+1} \right)\!(t,W_t) -\left\langle b,Z^{n}_t\right\rangle\right]dt \\
&\qquad\qquad\qquad-
\int_{s}^{1}\left\langle(\nabla_x v^{n+1}) (t,W_t)-Z^{n+1}_t ,dW_t\right\rangle
.
\end{split}\end{align}
This and the uniqueness of the decomposition of continuous semimartingales show
for all $s\in [0,1]$ that
$\P\bigl( Y^{n+1}_s= v^{n+1}(s,W_s)\bigr)=1$
and 
$\P\bigl(  Z^{n+1}_s= (\nabla_xv^{n+1})(s,W_s)\bigr)=1$. This completes the induction step.
Induction thus shows for all $ n\in\N_0$, $s\in [0,1]$ that
$\P\bigl( Y^{n}_s= v^{n}(s,W_s)\bigr)=1$
and 
$\P\bigl(  Z^{n}_s= (\nabla_xv^{n})(s,W_s)\bigr)=1$.
This and the fact that
$\P\bigl(  Y^{\infty}_0= v^{\infty}(0,W_0)\bigr)=1$
imply that for all $n\in \N$ it holds a.s.\ that
$Y^n_0-Y_0^{\infty}= v^n(0,0)-v^{\infty}(0,0)$.
This, the fact that for all $k,n\in \N_0$ with $n=2k$ it holds that
\begin{equation}\begin{split}
&
\left\lfloor\frac{n+1}{2}\right\rfloor != \left\lfloor\frac{2k+1}{2}\right\rfloor !
= k!\\
&= 1\cdot 2\cdots k \leq 
\sqrt{1\cdot 2\cdots k\cdot (k+1)(k+2)\cdots(2k)}=
\sqrt{(2k)!}=\sqrt{n!},\end{split}
\end{equation}
the fact that
for all $k,n\in \N_0$ with $n=2k+1$ it holds that
\begin{equation}\begin{split}
&
\left\lfloor\frac{n+1}{2}\right\rfloor!
=\left\lfloor\frac{2k+1+1}{2}\right\rfloor!= (k+1)!\\
&
=  2\cdot 3 \cdots (k+1) \leq 
\sqrt{2\cdot 3 \cdots (k+1)(k+2)(k+3)\cdots(2k+1) }
= \sqrt{n!},
\end{split}\end{equation}
and \eqref{t07}
imply that for all $n\in\N\cap\bigl[\lVert b\rVert^2-1,\infty\bigr)$  
  it holds a.s.\ that
\begin{equation}
\left\lvert Y_0^n-Y_0^{\infty}\right\rvert
= \left\lvert v^n(0,0)-v^{\infty}(0,0)\right\rvert
\geq 
\frac{1}{2}
\left(\frac{\lVert b \rVert^2}{4}\right)^{\lfloor\frac{n+1}{2}\rfloor}
\frac{1}{\sqrt{n!}}
. 
\end{equation}
This completes the proof of \cref{a10}.
\end{proof}
 The following \cref{l01} 
gives an example where the BSDE solution is an expoential function (see Item~\eqref{l01a})
  and the Picard approximations are just partial sums of the exponential series (see Item~\eqref{l01b}).
  Thereby, \cref{l01} shows that factorial speed of convergence cannot be improved
  up to exponential factors in the case of $y$-dependent drivers.
\begin{lemma}\label{l01}
Let $T\in (0,\infty)$ and
let $Y^k\in C( [0,T],\R) $, $k\in\N_0\cup\{\infty\}$,
satisfy for all $k\in\N_0$,
$s\in [0,T]$
 that
$Y_s^0 =1$,
$Y_s^{k+1}=1+\int_s^T Y_r^k \,dr$, and  
$Y_s^\infty= 1+ \int_s^T Y_r^\infty dr$.
Then \begin{enumerate}[i)] \setlength{\itemsep}{0pt}
\item\label{l01a} it holds for all $s\in[0,T]$ that $Y_s^\infty = e^{T-s} $,
\item \label{l01b}it holds for all $s\in[0,T]$, $n\in\N_0$ that  $Y_s^n =1+ \sum_{k=1}^n \frac{(T-s)^k}{ k! }$, and 
\item\label{l01c} it holds that
 $\sup_{s \in [0,T]} \lvert Y_s^\infty - Y_s^n\rvert= \sum_{k=n+1}^{\infty} \frac{T^k}{k!} \geq \frac{T^{n+1}}{(n+1)!}$.
\end{enumerate}
\end{lemma}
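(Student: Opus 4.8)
The plan is to establish the three items by elementary calculus, treating them in order.

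For Item~\eqref{l01a}, I would first note that the fixed point equation $Y_s^\infty = 1 + \int_s^T Y_r^\infty\,dr$ together with continuity of $Y^\infty$ forces $Y^\infty$ to be continuously differentiable with $\tfrac{d}{ds}Y_s^\infty = -Y_s^\infty$ and $Y_T^\infty = 1$; alternatively, one directly checks that $[0,T]\ni s\mapsto e^{T-s}$ satisfies the equation, since $1 + \int_s^T e^{T-r}\,dr = 1 + (e^{T-s}-1) = e^{T-s}$. Uniqueness of the solution within $C([0,T],\R)$ follows from Gronwall's inequality applied to the difference of two solutions, so $Y_s^\infty = e^{T-s}$ for all $s\in[0,T]$.

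For Item~\eqref{l01b}, I would argue by induction on $n\in\N_0$. The base case $n=0$ holds because $Y_s^0 = 1$ and the sum $\sum_{k=1}^{0}$ is empty. For the induction step, assuming $Y_r^n = 1 + \sum_{k=1}^n \tfrac{(T-r)^k}{k!}$ for all $r\in[0,T]$, I would insert this into the recursion and integrate term by term, using the identity $\int_s^T \tfrac{(T-r)^k}{k!}\,dr = \tfrac{(T-s)^{k+1}}{(k+1)!}$, to obtain $Y_s^{n+1} = 1 + (T-s) + \sum_{k=1}^n \tfrac{(T-s)^{k+1}}{(k+1)!} = 1 + \sum_{k=1}^{n+1}\tfrac{(T-s)^k}{k!}$.

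For Item~\eqref{l01c}, I would subtract the formula from Item~\eqref{l01b} from the Taylor expansion $e^{T-s} = 1 + \sum_{k=1}^\infty \tfrac{(T-s)^k}{k!}$ supplied by Item~\eqref{l01a}, obtaining $Y_s^\infty - Y_s^n = \sum_{k=n+1}^\infty \tfrac{(T-s)^k}{k!}$. Since every summand is nonnegative and nondecreasing in $T-s$, the absolute value equals this series and the supremum over $s\in[0,T]$ is attained at $s=0$, giving $\sum_{k=n+1}^\infty \tfrac{T^k}{k!}$; keeping only the leading term yields the lower bound $\tfrac{T^{n+1}}{(n+1)!}$. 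There is essentially no hard step in this lemma; the only point invoking a named tool is the uniqueness of $Y^\infty$, for which Gronwall's inequality suffices, and the interchange of integration with a finite sum in the induction step, which is immediate.
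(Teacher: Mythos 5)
Your proposal is correct and follows essentially the same route as the paper: verify that $e^{T-s}$ solves the integral equation and invoke a standard uniqueness result (you cite Gronwall, the paper cites Picard--Lindel\"of, which is an immaterial difference), prove Item~ii) by the same induction with termwise integration, and obtain Item~iii) by subtracting the partial sum from the exponential series and taking the supremum at $s=0$.
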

\begin{proof}
[Proof of \cref{l01}] The fact that
$\forall\, s\in [0,T]\colon Y_s^\infty= 1+ \int_s^T Y_r^\infty \,dr$ and the substitution rule show for all
$s\in[0,T]$ that $Y^\infty_{T-s}= 1+ \int_{T-s}^{T}Y_r^\infty\,dr= 1+\int_{0}^{s}Y_{T-r}^\infty\,dr$. This, the fact that
 $\forall\,s\in[0,T]\colon e^{T-s}= 1+\int_{0}^{s}e^{T-r}\,dr$, and the Picard--Lindel\"of theorem show \eqref{l01a}.
Next, we prove \eqref{l01b} by induction on $n\in\N_0$. 
 The fact that $\forall\,s\in [0,T]\colon Y_s^0 =1$
shows the base case  $n=0$. For the induction step $\N_0\ni n\mapsto n+1\in\N $ let $n\in\N_0$ and assume for all $s\in[0,T]$ that  $Y_s^n =1+ \sum_{k=1}^n \frac{(T-s)^k}{ k! }$.
The assumptions of \cref{l01} then
 show for all $s\in[0,T]$ that
\begin{align}\begin{split}
&
Y_s^{n+1}=1+\int_s^T Y_r^n \,dr=
1+\int_{s}^{T}\left(1+\sum_{k=1}^n \frac{(T-r)^k}{ k! }\right)dr\\
&
= 1+\sum_{k=1}^{n+1} \frac{-(T-r)^{k+1}}{(k+1)!}\Bigr|_{r=s}^T=1+\sum_{k=1}^{n+1}
\frac{(T-s)^k}{k!}.
\end{split}\end{align}
This completes the induction step. Induction hence shows \eqref{l01b}. Combining \eqref{l01a}, \eqref{l01b}, and the fact that
$\forall\,x\in\R\colon e^x=1+\sum_{k=1}^{\infty}\frac{x^k}{k!}$ yields \eqref{l01c}. The proof of \cref{l01} is thus completed.
\end{proof}

\section{A priori estimates for backward It\^o processes}\label{sec:apriori}
In this section we establish a priori estimates for certain backward It\^o processes. Results of this form are well-known in the scientific literature on BSDEs
(see, e.g., 
\cite[Proof of Theorem 3.1]{PardouxPeng1990},
\cite[Proposition 2.1]{ElKarouiPengQuenez1997},
\cite[Theorem 4.2.1]{Zha17},
\cite[Proposition 5.2]{pardouxrascanu2016}).
\cref{b19} below establishes estimates for an It\^o process
and its diffusion process in terms of the drift process and in terms
of the terminal value of the It\^o process.
The contribution of \cref{b19} is to provide explicit universal constants.
Moreover, the It\^o process in \cref{b19} and its drift process are not
assumed to be square-integrable and, in particular, the right-hand sides
of \eqref{eq:b01}, \eqref{eq:b02}, and \eqref{eq:b03} are allowed to be infinite (with positive probability).
We note that square-integrability of the diffusion process $Z$
in \cref{b19}, however, is in general required; e.g., choose $A\equiv 0$
and $Z$ such that the It\^o isometry does not hold for the
It\^o integral $Y_T$.

\begin{lemma}\label{b19}Let $T\in(0,\infty)$, $d,m\in\N$,
let
$\langle \cdot,\cdot\rangle \colon \R^d\times\R^d\to\R$ denote the standard scalar product on $\R^d$,
let
$\lVert  \cdot\rVert \colon \R^d\to[0,\infty)$ denote the standard norm on $\R^d$,
let
$\lVert  \cdot\rVert_{\sfF} \colon \R^{d\times m}\to[0,\infty)$ denote the Frobenius norm on $\R^{d\times m}$,
let $(\Omega,\mathcal{F},\P, (\F_t)_{t\in[0,T]}) $ be a filtered probability space which satisfies the usual conditions,
let $W\colon [0,T]\times \Omega \to\R^m$ be a standard 
$(\F_t)_{t\in[0,T]}$-Brownian motion with continuous sample paths,
let  $Y\colon[0,T]\times\Omega\to\R^d$ be adapted
   with continuous sample paths,
let $A\colon[0,T]\times\Omega\to\R^d$ be measurable, let
$Z\colon[0,T]\times\Omega\to\R^{d\times m}$
   be progressively
   measurable,
 and assume that for all $s\in [0,T]$ 
 it holds  a.s.\ that
\begin{align}\label{a01}
\int_0^T \Bigl(\lVert A_t\rVert+\E[\lVert Z_t\rVert_{\sfF}^2]\Bigr)\,dt<\infty\quad\text{and}\quad   Y_s=Y_T+\int_s^T A_t\,dt-\int_s^TZ_t\,dW_t.
\end{align}
Then  \begin{enumerate}[(i)]
\item \label{b01}for all $s\in[0,T]$, $\lambda\in (0,\infty)$  
it holds   a.s.\ that
\begin{align}\label{eq:b01}
\E\!\left[
e^{\lambda s}\lVert  Y_s\rVert ^2+
 \int_{s}^{T}e^{\lambda t}\lVert Z_{t}\rVert_{\sfF}^2\,dt\Big|\F_s\right]
\leq \E\!\left[e^{\lambda T}\lVert  Y_T\rVert ^2
+
\int_{s}^{T}\frac{e^{\lambda t}}{\lambda}
\lVert  A_{t}\rVert ^2
\,dt\Big|\F_s \right],
\end{align}
\item  \label{b02}  for all $s\in[0,T]$, $\lambda\in (0,\infty)$  
it holds   a.s.\ that
\begin{align}\label{eq:b02}
  \E\!\left[\sup_{t\in[s,T]}\left( e^{\lambda t}\lVert Y_t\rVert^2
  +\int_t^{T} e^{\lambda u}\lVert Z_u\rVert_{\sfF}^2\,du\right)\middle| \F_s\right]
  &\leq 34\E\!\left[ e^{\lambda T}\lVert Y_{T}\rVert^2+ \int_s^{T}\frac{e^{\lambda t}}{\lambda}\lVert A_t\rVert^2\,dt\middle| \F_s\right],
\end{align}
and 
\item \label{b06} it holds for all $\alpha,\lambda\in(0,\infty)$ that
\begin{align}\begin{split}\label{eq:b03}
&
\int_0^T\left[
\frac{t^{\alpha-1}e^{\lambda t}
\E\!\left[
\left\lVert   Y_t\right\rVert^2\right]}{\Gamma(\alpha)}+
\frac{t^{\alpha} e^{\lambda t}\E\!\left[
\left\lVert   Z_t \right\rVert_{\sfF}^2\right]}{\Gamma(\alpha+1)}\right]dt
\leq 
\frac{e^{\lambda T}T^\alpha\E\!\left[\lVert  Y_T\rVert ^2\right]}{\Gamma(\alpha+1)}
+\frac{1}{\lambda}\int_0^T
\frac{e^{\lambda t}t^{\alpha}\E\!\left[
\lVert  A_{t}\rVert ^2\right] }{\Gamma(\alpha+1)}
dt.
\end{split}\end{align}
\end{enumerate}
\end{lemma}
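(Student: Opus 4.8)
The plan is to prove all three items from a single computation: applying It\^o's formula to the process $t\mapsto e^{\lambda t}\lVert Y_t\rVert^2$ on $[s,T]$. Since $Y_t=Y_T+\int_t^T A_u\,du-\int_t^T Z_u\,dW_u$, the dynamics of $Y$ are $dY_t=-A_t\,dt+Z_t\,dW_t$, so It\^o's product rule yields, for $s\le r\le T$,
\begin{align}\begin{split}
e^{\lambda r}\lVert Y_r\rVert^2
&= e^{\lambda s}\lVert Y_s\rVert^2+\int_s^r e^{\lambda u}\bigl(\lambda\lVert Y_u\rVert^2-2\langle Y_u,A_u\rangle+\lVert Z_u\rVert_{\sfF}^2\bigr)\,du\\
&\qquad+2\int_s^r e^{\lambda u}\langle Y_u,Z_u\,dW_u\rangle.
\end{split}\end{align}
Rearranging and taking $r=T$ gives $e^{\lambda s}\lVert Y_s\rVert^2+\int_s^T e^{\lambda u}\lVert Z_u\rVert_{\sfF}^2\,du = e^{\lambda T}\lVert Y_T\rVert^2-\int_s^T e^{\lambda u}(\lambda\lVert Y_u\rVert^2-2\langle Y_u,A_u\rangle)\,du-2\int_s^T e^{\lambda u}\langle Y_u,Z_u\,dW_u\rangle$. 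The key pointwise inequality is $-\lambda\lVert Y_u\rVert^2+2\langle Y_u,A_u\rangle\le \frac{1}{\lambda}\lVert A_u\rVert^2$ (complete the square: $2\langle Y_u,A_u\rangle\le\lambda\lVert Y_u\rVert^2+\frac1\lambda\lVert A_u\rVert^2$). Using this and taking conditional expectation $\E[\cdot\mid\F_s]$, the stochastic integral term must vanish; this is where the integrability hypothesis $\int_0^T(\lVert A_t\rVert+\E[\lVert Z_t\rVert_{\sfF}^2])\,dt<\infty$ enters. I would first establish that $\E[\sup_{t\in[s,T]}\lVert Y_t\rVert^2]<\infty$ (via Doob/BDG applied to the representation of $Y$, using square-integrability of $Z$ and integrability of $A$), which then makes $u\mapsto e^{\lambda u}Y_u Z_u$ a genuine martingale integrand, so $\E[\int_s^T e^{\lambda u}\langle Y_u,Z_u\,dW_u\rangle\mid\F_s]=0$. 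This yields Item~\eqref{b01}.

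For Item~\eqref{b02} I would take the supremum over $r\in[s,T]$ in the It\^o identity \emph{before} taking expectations. Bounding $\sup_{r\in[s,T]}e^{\lambda r}\lVert Y_r\rVert^2$ by $e^{\lambda T}\lVert Y_T\rVert^2+\int_s^T\frac{e^{\lambda u}}\lambda\lVert A_u\rVert^2\,du+2\sup_{r\in[s,T]}\lvert\int_s^r e^{\lambda u}\langle Y_u,Z_u\,dW_u\rangle\rvert$, and similarly controlling $\sup_{r}\int_r^T e^{\lambda u}\lVert Z_u\rVert_{\sfF}^2\,du\le\int_s^T e^{\lambda u}\lVert Z_u\rVert_{\sfF}^2\,du$ (already bounded via Item~\eqref{b01}), the only new ingredient is the Burkholder--Davis--Gundy inequality for the martingale term: $\E[\sup_r\lvert\int_s^r e^{\lambda u}\langle Y_u,Z_u\,dW_u\rangle\rvert\mid\F_s]\le C\,\E[(\int_s^T e^{2\lambda u}\lVert Y_u\rVert^2\lVert Z_u\rVert_{\sfF}^2\,du)^{1/2}\mid\F_s]$. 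Then $(\int_s^T e^{2\lambda u}\lVert Y_u\rVert^2\lVert Z_u\rVert_{\sfF}^2\,du)^{1/2}\le\sup_{u}(e^{\lambda u/2}\lVert Y_u\rVert)(\int_s^T e^{\lambda u}\lVert Z_u\rVert_{\sfF}^2\,du)^{1/2}\le\frac14\sup_u e^{\lambda u}\lVert Y_u\rVert^2+(\int_s^T e^{\lambda u}\lVert Z_u\rVert_{\sfF}^2\,du)$, absorbing the $\sup$ term into the left-hand side. Tracking the BDG constant (the standard value $C=3$ for $L^1$ works, or $2\sqrt2$) and the factor $2$ from the product rule gives the universal constant $34$; the arithmetic here is where I would be most careful, since the stated constant is explicit.

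For Item~\eqref{b06} I would integrate the conditional estimate from Item~\eqref{b01} against the measure $t^{\alpha-1}\,dt/\Gamma(\alpha)$ on $[0,T]$ (taking unconditional expectations). The left side of \eqref{eq:b01} integrated in $s$ produces $\int_0^T\frac{s^{\alpha-1}e^{\lambda s}}{\Gamma(\alpha)}\E[\lVert Y_s\rVert^2]\,ds+\int_0^T\frac{s^{\alpha-1}}{\Gamma(\alpha)}\E[\int_s^T e^{\lambda t}\lVert Z_t\rVert_{\sfF}^2\,dt]\,ds$; by Tonelli the double integral equals $\int_0^T e^{\lambda t}\E[\lVert Z_t\rVert_{\sfF}^2]\bigl(\int_0^t\frac{s^{\alpha-1}}{\Gamma(\alpha)}\,ds\bigr)dt=\int_0^T\frac{t^\alpha e^{\lambda t}}{\Gamma(\alpha+1)}\E[\lVert Z_t\rVert_{\sfF}^2]\,dt$, using $\int_0^t s^{\alpha-1}\,ds=t^\alpha/\alpha$ and $\alpha\Gamma(\alpha)=\Gamma(\alpha+1)$. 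Similarly the right side: $\E[\lVert Y_T\rVert^2]$ picks up $\int_0^T\frac{s^{\alpha-1}e^{\lambda T}}{\Gamma(\alpha)}\,ds=\frac{T^\alpha e^{\lambda T}}{\Gamma(\alpha+1)}$ — wait, this would need $e^{\lambda s}$ replaced by $e^{\lambda T}$, which is fine since the bound in \eqref{eq:b01} already has $e^{\lambda T}$ on the right — and the $A$-term becomes, again by Tonelli, $\frac1\lambda\int_0^T\frac{e^{\lambda t}t^\alpha}{\Gamma(\alpha+1)}\E[\lVert A_t\rVert^2]\,dt$. This gives \eqref{eq:b03}.

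The main obstacle I anticipate is not any single deep step but rather the careful handling of integrability/martingale-property justifications in Item~\eqref{b01} — specifically, arguing that the local martingale $r\mapsto\int_s^r e^{\lambda u}\langle Y_u,Z_u\,dW_u\rangle$ is a true martingale (or at least has zero conditional mean) \emph{without} a priori square-integrability of $Y$ and $A$, since the lemma explicitly allows the right-hand sides to be infinite. The resolution is presumably a localization argument: stop at $\tau_N=\inf\{t:\lVert Y_t\rVert\ge N\}\wedge T$, derive the estimate on $[s\wedge\tau_N,\tau_N]$ where everything is bounded, and pass to the limit $N\to\infty$ via monotone/Fatou, so that the estimate holds even when both sides are $+\infty$. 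I would flag this localization as the one place requiring genuine care; the rest is It\^o's formula, completing the square, BDG, and Tonelli.
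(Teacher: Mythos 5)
Your plan follows the paper's proof essentially step for step: It\^o's formula applied to $e^{\lambda t}\lVert Y_t\rVert^2$, the completion of the square $\lambda\lVert Y\rVert^2-2\langle Y,A\rangle=\tfrac{1}{\lambda}\bigl(\lVert\lambda Y-A\rVert^2-\lVert A\rVert^2\bigr)$, the $L^1$-Burkholder--Davis--Gundy inequality plus Cauchy--Schwarz for the martingale term, and Tonelli together with the tower property for \eqref{b06}. Two points of divergence are worth recording. First, for the integrability issue you propose localization by stopping times; the paper instead fixes the event $B_s\in\F_s$ on which $\E[\lVert Y_T\rVert^2+\int_s^T\lVert A_t\rVert^2\,dt\mid\F_s]<\infty$, shows $\E[\sup_{t\in[s,T]}\lVert Y_t\rVert^2\mid\F_s]<\infty$ a.s.\ on $B_s$ via BDG, and concludes that the stochastic integral is a true martingale under $\P(\cdot\mid\F_s)$ on $B_s$, while off $B_s$ the claimed inequalities hold trivially because their right-hand sides are infinite. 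Both routes work. Second, and this is the one step of your plan that would fail as written: in your argument for \eqref{b02} the split $ab\le\tfrac14a^2+b^2$ leaves the supremum term with coefficient $2\sqrt8\cdot\tfrac14=\tfrac{\sqrt8}{2}>1$ after multiplying by the BDG constant and the factor $2$ from the product rule, so it cannot be absorbed into the left-hand side. The paper avoids this by treating the entire left-hand side $x=\E[\sup_{t\in[s,T]}(e^{\lambda t}\lVert Y_t\rVert^2+\int_t^Te^{\lambda u}\lVert Z_u\rVert_{\sfF}^2\,du)\mid\F_s]$ as a single quantity satisfying $x\le c+2\sqrt8\sqrt{c}\sqrt{x}$, where $c$ is the right-hand side of \eqref{eq:b01} and where \eqref{b01} is used to bound both $\E[\int_s^Te^{\lambda u}\lVert Z_u\rVert_{\sfF}^2\,du\mid\F_s]$ and $\E[\sup_t e^{\lambda t}\lVert Y_t\rVert^2\mid\F_s]^{1/2}$-type factors by $c$; solving the quadratic inequality gives $x\le(\sqrt8+3)^2c\le34c$. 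You flagged this arithmetic as delicate; the quadratic-inequality trick is the clean way to land on the constant $34$.
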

\begin{proof}[Proof of \cref{b19}]\sloppy
 Throughout this proof for every $s\in[0,T]$ let $B_s\in\mathbb{F}_s$ satisfy
that 
 a.s.\ on $B_s$ it holds that 
$ \E\!\left[ \|Y_{T}\|^2
    +\int_s^{T}\|A_t\|^2\,dt\Big| \F_s\right]<\infty$
and  a.s.\ on $\Omega\setminus B_s$ it holds that 
$ \E\!\left[ \|Y_{T}\|^2
    +\int_s^{T}\|A_t\|^2\,dt\Big| \F_s\right]=\infty$, let 
$\{e_1,e_2,\ldots,e_m\}\subseteq \R^m $ 
be an orthonormal basis of $\R^m$, and let $\alpha,\lambda\in (0,\infty)$.
  First note that
\eqref{a01},  
Jensen's inequality, and the Burkholder-Davis-Gundy inequality (see, e.g., \cite[Lemma~7.2]{dz92}) yield that for all $s\in[0,T]$ 
it holds
a.s.\ on $B_s$  that
  \begin{equation}  \begin{split}\label{b03}
  \E\!\left[\sup_{t\in [s,T]}\lVert Y_t\rVert^2\Big|\mathbb{F}_s\right]
  &\leq 3 \left(\lVert Y_s\rVert^2+\E\!\left[\left(\int_s^T \lVert A_t\rVert\,dt\right)^2
  +\sup_{u\in [s,T]}\left\lVert \int_s^uZ_t\,dW_t\right\rVert^2\Big|\mathbb{F}_s\right]\right)
  \\&\leq
   12 \left(\lVert Y_s\rVert^2+\E\left[T\int_s^T\lVert A_t\rVert^2\,dt\Big|\mathbb{F}_s\right]
  + \E\left[\int_s^T\lVert Z_t\rVert_{\sfF}^2\,dt\Big|\mathbb{F}_s\right]
  \right)<\infty.
  \end{split}     \end{equation}
This, 
  the $L^1$-Burkholder-Davis-Gundy inequality (e.g., \cite[Theorem~1]{Ren08}),
  the Cauchy-Schwarz inequality, and H\"older's inequality
  imply that for all $s\in[0,T]$ it holds 
 a.s.\ on $B_s$ that
  \begin{equation}  \begin{split}\label{eq:WIntegralYZ}
   &\E\!\left[\sup_{u\in [s,T]}
\left\lvert \int_s^{u}e^{\lambda t}\langle Y_t,Z_t\,dW_t\rangle\right\rvert
\Big|\mathbb{F}_s
\right]
  \leq \sqrt{8}  \E\left[\left(\int_s^{T}e^{2\lambda t}
\left[\sum_{i=1}^{m}
\left\lvert\left \langle
Y_t,Z_te_i\right
\rangle
 \right\rvert^2\right]dt\right)^{\nicefrac{1}{2}}\Big|\mathbb{F}_s\right]\\
&\leq \sqrt{8}  \E\!\left[\left(\int_s^{T}e^{\lambda t}\left\lVert Z_t\right\rVert_{\sfF}^2\,dt\right)^{\!\nicefrac{1}{2}}\left(\sup_{t\in [s,T]}e^{\lambda t}\lVert Y_t\rVert^2\right)^{\!\nicefrac{1}{2}}\Big|\mathbb{F}_s\right]\\
  &\leq \sqrt{8} 
\left(\E\!\left[\int_s^{T}e^{\lambda t}\left\lVert Z_t\right\rVert_{\sfF}^2\,dt\Big|\mathbb{F}_s\right]
                  \E\!\left[\sup_{t\in [s,T]}e^{\lambda t}\lVert Y_t\rVert^2\Big|\mathbb{F}_s\right]
            \right)^{\frac{1}{2}}.
  \end{split}     \end{equation}
This, 
  \eqref{a01}, and
  \eqref{b03} 
    yield that for all $s\in[0,T]$ 
  it holds a.s.\ that
  $\big(\1_{B_s}\int_s^{u} e^{\lambda t}\langle Y_t,Z_t\,dW_t\rangle\big)_{u\in[s,T]}$
  is  a martingale with respect to $\P(\cdot|\mathbb{F}_s)$
  and
  \begin{equation}  \begin{split}\label{b04}
    \E\!\left[\1_{B_s}\int_s^T e^{\lambda t}\langle Y_t,Z_t\,dW_t\rangle
    \Big|\F_s\right]=0.
  \end{split}     \end{equation}
Next note that \eqref{a01} and
It\^o's formula show that for all $s\in[0,T]$ it holds  a.s.\ that
\begin{align}\begin{split}
&e^{\lambda T}\lVert  Y_T\rVert ^2
-e^{\lambda s}\lVert  Y_s\rVert ^2=
 \int_{s}^{T}d(e^{\lambda t}\lVert  Y_{t}\rVert ^2)=  \int_{s}^{T}\lambda e^{\lambda t}\lVert  Y_{t}\rVert ^2\,dt+ \int_{s}^{T}e^{\lambda t}d (\lVert  Y_{t}\rVert ^2)\\
&
= \int_{s}^{T}\lambda e^{\lambda t}\lVert  Y_{t}\rVert ^2\,dt
+
\int_{s}^{T} e^{\lambda t}\left(
\lVert Z_{t}\rVert_{\sfF}^2
-2\langle Y_{t}, A_{t}\rangle\right)dt+\int_{s}^{T}2e^{\lambda t}\langle Y_{t}, Z_{t} dW_{t}\rangle\\
&= \int_{s}^{T}e^{\lambda t}\lVert Z_{t}\rVert_{\sfF}^2\,dt
+ \int_{s}^{T}\tfrac{e^{\lambda t}}{\lambda}
\bigl[
\lVert \lambda Y_{t}-A_{t}\rVert^2 -\lVert  A_{t}\rVert ^2\bigr]\,dt+\int_{s}^{T}2e^{\lambda t}\langle Y_{t} , Z_{t} dW_{t}\rangle.
\end{split}\end{align}
This shows that for all $s\in[0,T]$ it holds a.s.\ that
\begin{align}\begin{split}
&
e^{\lambda s}\lVert  Y_s\rVert ^2+
 \int_{s}^{T}e^{\lambda t}\lVert Z_{t}\rVert_{\sfF}^2\,dt+
\int_{s}^{T}\tfrac{e^{\lambda t}}{\lambda}
\lVert \lambda Y_{t}-A_{t}\rVert^2 \,dt
\\
&=e^{\lambda T}\lVert  Y_T\rVert ^2
+
\int_{s}^{T}\tfrac{e^{\lambda t}}{\lambda}
\lVert  A_{t}\rVert ^2
\,dt-\int_{s}^{T}2e^{\lambda t}\langle Y_{t} , Z_{t} dW_{t}\rangle.
\end{split}\label{b05}\end{align}
This and \eqref{b04} show that for all $s\in[0,T]$ it holds  a.s.\ 
on $B_s$ that
\begin{align}
\E\!\left[
e^{\lambda s}\lVert  Y_s\rVert ^2+
 \int_{s}^{T}e^{\lambda t}\lVert Z_{t}\rVert_{\sfF}^2\,dt\Big|\F_s\right]
\leq \E\!\left[e^{\lambda T}\lVert  Y_T\rVert ^2
+
\int_{s}^{T}\tfrac{e^{\lambda t}}{\lambda}
\lVert  A_{t}\rVert ^2
\,dt\Big|\F_s \right].\label{b01a}
\end{align} This and the definition of $B_s$, $s\in [0,T]$, prove \eqref{b01}.

Next observe that
\eqref{eq:WIntegralYZ}, \eqref{b04} and \eqref{b01}
  yield that for all $s\in[0,T]$ it holds    a.s.\ on  $B_s$ that
  \begin{equation}
  \begin{split}
  &\E\bigg[\sup_{t\in [s,T]}\left(-2\int_t^{ T} e^{\lambda u}\langle Y_u,Z_udW_u\rangle\right)\Big|\mathbb{F}_s\bigg]
  \\&
  =
  2\E\bigg[\sup_{t\in [s,T]}\left(\int_s^{ t} e^{\lambda u}\langle Y_u,Z_u\,dW_u\rangle\right)
  \big|\mathbb{F}_s\bigg]
  -2\E\bigg[\int_s^{T} e^{\lambda u}\langle Y_u,Z_u\,dW_u\rangle
  \big|\mathbb{F}_s\bigg]
  \\
  &\leq 2\sqrt{8} \left(
     \E\!\left[\int_s^{T} e^{\lambda t}\left\lVert Z_t\right\rVert^2_{\sfF}\,dt
                    \big|\mathbb{F}_s\right]
  \E\!\left[\sup_{t\in [s,T]} e^{\lambda t}\|Y_t\|^2\big|\mathbb{F}_s\right]
            \right)^{\frac{1}{2}}.
  \\
  &\leq 2\sqrt{8} \left(
     \E\!\left[e^{\lambda T}\|Y_{T}\|^2+\int_s^{T}\tfrac{e^{\lambda t}}{\lambda}\|A_t\|^2\,dt
                    \big|\mathbb{F}_s\right]
  \E\!\left[\sup_{t\in [s,T]} e^{\lambda t}\|Y_t\|^2\big|\mathbb{F}_s\right]
            \right)^{\frac{1}{2}}.
  \end{split}
  \end{equation}
This, \eqref{b05}, \eqref{a01}, and \eqref{b03}
  yield that for all $s\in[0,T]$  it holds   a.s.\ on  $B_s$ that
  \begin{equation} \begin{split}
    &\E\!\left[
    \sup_{t\in [s,T]}\left( e^{\lambda t}\lVert Y_t\rVert^2+
    \int_t^{T} e^{\lambda u}\lVert Z_u\rVert^2_{\sfF}\,du\right)
    \big|\mathbb{F}_s\right]
  \\
  &\leq\E\!\left[ e^{\lambda T}\lVert Y_{T}\rVert^2+\int_s^{T}\frac{e^{\lambda t}}{\lambda}\lVert A_t\rVert^2\,dt
    \big|\mathbb{F}_s\right]
    +\E\left[\sup_{t\in [s,T]}\left(-2\int_t^{T} e^{\lambda u}\langle Y_u,Z_u\,dW_u\rangle\right)
    \big|\mathbb{F}_s\right]
  \\
  &\leq\E\!\left[ e^{\lambda T}\lVert Y_{T}\rVert^2+\int_s^{T}\frac{e^{\lambda t}}{\lambda}\lVert A_t\rVert^2\,dt
    \big|\mathbb{F}_s\right]
    +2\sqrt{8}\left(
    \E\left[ e^{\lambda T}\lVert Y_{T}\rVert^2+\int_s^{T}\frac{e^{\lambda t}}{\lambda}\lVert A_t\rVert^2\,dt
    \big|\mathbb{F}_s\right]
              \right)^{\frac{1}{2}}
    \\&\qquad
    \cdot
      \left(\E\!\left[\sup_{t\in [s,T]}\left( e^{\lambda t}\lVert Y_t\rVert^2
                          +\int_t^{T} e^{\lambda u}\lVert Z_u\rVert^2_{\sfF}\,du\right)
     \big|\F_s\right]\right)^{\frac{1}{2}}<\infty.
     \label{eq:final.esti}
  \end{split} \end{equation}
This and the fact that
$\forall\,x,c\in[0,\infty)\colon \bigl(\bigl[x\leq c+2\sqrt{8}\sqrt{c}\sqrt{x}\bigr]\Rightarrow\bigl[ x\leq \bigl(\sqrt{8}+\sqrt{8+1}\bigr)^2 c\leq 34c\bigr]\bigr)$ imply  for all $s\in[0,T]$ that  a.s.\ on  $B_s$ it holds that
\begin{align}
  \E\!\left[\sup_{t\in[s,T]}\left( e^{\lambda t}\lVert Y_t\rVert^2
  +\int_t^{T} e^{\lambda u}\lVert Z_u\rVert_{\sfF}^2\,du\right)\middle| \F_s\right]
  &\leq 34\E\!\left[ e^{\lambda T}\lVert Y_{T}\rVert^2+ \int_s^{T}\frac{e^{\lambda t}}{\lambda}\lVert A_t\rVert^2\,dt\middle| \F_s\right].
\end{align}
This and the definition of $B_s$, $s\in [0,T]$, prove \eqref{b02}.

Next,  the fact that
$\forall\, t\in [0,T]\colon \frac{t^{\alpha}}{\Gamma(\alpha+1)}= 
\int_{0}^{t}\frac{s^{\alpha-1}\,ds}{\Gamma(\alpha)}
$, Tonelli's theorem, the tower property, and
\eqref{b01} show
 that
\begin{align}
&
\int_0^T\left[
\frac{t^{\alpha-1}}{\Gamma(\alpha)}e^{\lambda t}
\E\!\left[
\left\lVert   Y_t\right\rVert^2\right]+
\frac{t^{\alpha} e^{\lambda t}}{\Gamma(\alpha+1)}
\E\!\left[
\left\lVert   Z_t \right\rVert_{\sfF}^2\right]\right]\!dt\nonumber  \\
&
= \int_0^T
\frac{t^{\alpha-1}}{\Gamma(\alpha)}e^{\lambda t}
\E\!\left[
\left\lVert   Y_t\right\rVert^2\right]\!dt+
\int_0^T\int_{0}^{t}\frac{s^{\alpha-1} e^{\lambda t}}{\Gamma(\alpha)}
\E\!\left[
\left\lVert   Z_t\right\rVert_{\sfF}^2\right]\!dsdt  \nonumber \\
&
= \int_0^T
\frac{s^{\alpha-1}}{\Gamma(\alpha)}
e^{\lambda s}
\E\!\left[
\left\lVert   Y_s\right\rVert^2\right]\!ds
+
\int_0^T\int_{s}^{T}\frac{s^{\alpha-1} e^{\lambda t}}{\Gamma(\alpha)}
\E\!\left[
\left\lVert   Z_t\right\rVert_{\sfF}^2\right]\!dtds \nonumber  \\
&
=
\int_0^T\frac{s^{\alpha-1} }{\Gamma(\alpha)}
\E\!\left[\E\!\left[e^{\lambda s} \left\lVert Y_s\right\rVert^2+
\int_{s}^{T}
e^{\lambda t}
\left\lVert   Z_t\right\rVert_{\sfF}^2 dt\Bigr|\F_s\right]\right]\!ds \nonumber  \\
&\leq \int_0^T\frac{s^{\alpha-1} }{\Gamma(\alpha)}
\E\!\left[
\E\!\left[e^{\lambda T}\lVert  Y_T\rVert ^2
+
\int_{s}^{T}\frac{e^{\lambda t}}{\lambda}
\lVert  A_{t}\rVert ^2
\,dt\Big|\F_s \right]\right]\!ds \nonumber  \\
&= e^{\lambda T}
\E\!\left[\lVert  Y_T\rVert ^2\right]
\left(\int_0^T\frac{s^{\alpha-1}\,ds }{\Gamma(\alpha)}\right)
+
 \int_0^T\int_{0}^{t}\frac{s^{\alpha-1} }{\Gamma(\alpha)}
\frac{e^{\lambda t}}{\lambda}
\E\!\left[
\lVert  A_{t}\rVert ^2\right]dsdt\nonumber  \\
&=\frac{e^{\lambda T}T^\alpha\E\!\left[\lVert  Y_T\rVert ^2\right]}{\Gamma(\alpha+1)}
+\frac{1}{\lambda}\int_0^T
\frac{e^{\lambda t}t^{\alpha}\E\!\left[
\lVert  A_{t}\rVert ^2\right] }{\Gamma(\alpha+1)}\,
dt.
\end{align}
This shows \eqref{b06}. The proof of \cref{b19} is thus completed.
\end{proof}
\section{Upper bounds for the convergence speed of Picard iterations}\label{sec:upper_bound}
In this section we provide upper bounds for the convergence speed of Picard iterations of BSDEs. \cref{b20} establishes an explicit bound for the $L^2$-distance between the Picard iterations and the solution of a BSDE with a globally Lipschitz continuous nonlinearity. Our proof of \cref{b20} relies on the a priori estimates for backward It\^o processes provided in \cref{b19}. In \cref{r01} we employ the estimate of \cref{b20} to obtain the square root-factorial speed of convergence of Picard iterations. In \cref{r02} we employ the estimate of \cref{b20} to obtain the factorial speed of convergence of Picard iterations in the z-independent case. 


\begin{proposition}\label{b20}
Let
$T\in (0,\infty)$,
 $d,m\in\N$, $\lipconstY,\lipconstZ \in [0,\infty) $,
let $0^0=1$,
let
$\langle \cdot,\cdot\rangle \colon \R^d\times\R^d\to\R$ denote the standard scalar product on $\R^d$,
let
$\lVert  \cdot\rVert \colon \R^d\to[0,\infty)$ denote the standard norm on $\R^d$,
let
$\lVert  \cdot\rVert_{\sfF} \colon \R^{d\times m}\to[0,\infty)$ denote the Frobenius norm on $\R^{d\times m}$,
let $(\Omega,\mathcal{F},\P, (\F_t)_{t\in[0,T]}) $ be a filtered probability space which satisfies the usual conditions, 
let $f\colon[0,T]\times \Omega \times \R^d\times\R^{d\times m}\to\R^d$ be measurable, 
assume that for all $t\in [0,T]$, $y,\tilde{y}\in\R^d$, $z,\tilde{z}\in\R^{d\times m}$ it holds a.s.\
  that
\begin{equation}  
\begin{split}
 \left   \lVert f(t,y,z)-f(t,\tilde{y},\tilde{z})\right\rVert
    \leq \lipconstY \lVert y-\tilde{y}\rVert
    + \lipconstZ \lVert z-\tilde{z}\rVert_{\sfF},
\end{split}  \label{c03}   
\end{equation}
let $W\colon [0,T]\times \Omega \to\R^m$ be a standard 
$(\F_t)_{t\in[0,T]}$-Brownian motion with continuous sample paths,
let $\xi \colon \Omega\to\R^d$ be $\F_T$-measurable,
let
$Y^k\colon[0,T]\times\Omega\to\R^d$, 
$k\in \N_0\cup\{\infty\}$,
be adapted
 with continuous sample paths,
let 
$Z^k\colon[0,T]\times\Omega\to\R^{d\times m}$,
$k\in \N_0\cup\{\infty\}$,
be progressively measurable, and
assume that
for all $s\in [0,T]$, $k\in\N_0\cup\{ \infty \}$
it holds  a.s.\  that
 $\int_0^T\E[\lVert\xi\rVert^2+\lVert f(t,0,0)\rVert^2+\lVert Y^\infty_t\rVert^2+\lVert Z_t^k\rVert^2_{\sfF}]\,dt<\infty$,
$Y^0_s=0$, $Z^0_s=0$,
and
\begin{align} \label{b20a}
Y^{k+1}_{s}=\xi+\int_{s}^{T}f(t,Y_t^k, Z_t^k)\,dt- \int_{s}^{T}
Z^{k+1}_t\,dW_t.
\end{align}
Then  it holds 
for all $k\in\N$
that
 \begin{align}\begin{split}
&
\E\!\left[\sup_{t\in[0,T]}\left(\left\lVert Y^{k}_t-Y_t^\infty\right\rVert^2\right)+\int_0^{T}
\left\lVert Z^{k}_t-Z_t^\infty\right\rVert^2_{\sfF}\,dt\right]\\
&
\leq 
35
\left(\frac{Te}{k}\right)^{k}
\left[
 \sum_{\ell=0}^{k}
\frac{k!\lipconstY ^{\ell}\lipconstZ ^{k-\ell}T^{\ell/2}}{\ell! (k-\ell)!\sqrt{\ell!}}
\right]^2
\left(
\E\bigl[\lVert \xi\rVert^2\bigr]+\frac{T}{k}
\int_0^T
\E\!\left[
\bigl\lVert f(t,Y_t^\infty,Z_t^\infty)\bigr\rVert^2\right]\!dt\right)<\infty.
\end{split}\end{align}
\end{proposition}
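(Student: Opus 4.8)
The plan is to exploit the linearity of the Picard recursion in the error process and to iterate the a priori estimate \eqref{eq:b02} from \cref{b19}. First I would set $\delta Y^k := Y^k - Y^\infty$ and $\delta Z^k := Z^k - Z^\infty$ and observe that subtracting the fixed-point equation for $(Y^\infty,Z^\infty)$ (which is the $k=\infty$ case of \eqref{b20a}) from \eqref{b20a} gives, for every $k\in\N$ and $s\in[0,T]$, a.s.\
\begin{equation*}
\delta Y^{k+1}_s = \int_s^T \bigl(f(t,Y^k_t,Z^k_t)-f(t,Y^\infty_t,Z^\infty_t)\bigr)\,dt - \int_s^T \delta Z^{k+1}_t\,dW_t,
\end{equation*}
so $\delta Y^{k+1}$ is a backward It\^o process with zero terminal value and drift $A^{k+1}_t := f(t,Y^k_t,Z^k_t)-f(t,Y^\infty_t,Z^\infty_t)$, which by \eqref{c03} satisfies $\lVert A^{k+1}_t\rVert \le \lipconstY\lVert \delta Y^k_t\rVert + \lipconstZ\lVert \delta Z^k_t\rVert_{\sfF}$. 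The square-integrability hypotheses guarantee the integrability conditions \eqref{a01} needed to apply \cref{b19}.

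Next I would apply \eqref{eq:b02} of \cref{b19} (with $s=0$, so the conditional expectation becomes an ordinary expectation) to $\delta Y^{k+1}$ with a parameter $\lambda\in(0,\infty)$ to be chosen, obtaining
\begin{equation*}
\E\!\left[\sup_{t\in[0,T]}\Bigl(e^{\lambda t}\lVert\delta Y^{k+1}_t\rVert^2+\int_t^T e^{\lambda u}\lVert\delta Z^{k+1}_u\rVert_{\sfF}^2\,du\Bigr)\right] \le \frac{34}{\lambda}\int_0^T e^{\lambda t}\,\E\bigl[\lVert A^{k+1}_t\rVert^2\bigr]\,dt.
\end{equation*}
Using $\lVert A^{k+1}_t\rVert^2 \le 2\lipconstY^2\lVert\delta Y^k_t\rVert^2 + 2\lipconstZ^2\lVert\delta Z^k_t\rVert_{\sfF}^2$ together with $\lVert\delta Y^k_t\rVert^2 \le \sup_{u\in[0,T]}\lVert\delta Y^k_u\rVert^2$, this bounds the right-hand side by a constant times $\lambda^{-1}\int_0^T e^{\lambda t}$ times the relevant $L^2$-quantity of $\delta Y^k,\delta Z^k$ — but to turn this into a clean iteration I expect one must instead iterate the weighted quantity more carefully. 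The cleaner route, which I would actually take, is to combine \eqref{eq:b01} and \eqref{eq:b06}: iterating \eqref{l06} (equivalently \eqref{eq:b01} applied $k$ times with a time-dependent weight of the form $t^{\alpha}e^{\lambda t}/\Gamma(\alpha+1)$ coming from \eqref{eq:b03}) produces, after $k$ steps, a factor $(T^{\ell/2}/\sqrt{\ell!})$-type gain from the $Z$-contributions and a $T/\lambda$-type gain from the $Y$-contributions, and the mixed binomial sum $\sum_{\ell=0}^k \binom{k}{\ell}\lipconstY^\ell\lipconstZ^{k-\ell}T^{\ell/2}/\sqrt{\ell!}$ arises precisely from distributing the two Lipschitz constants across the $k$ iterations. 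I would track these constants by induction on $k$, using at each step the elementary identity $\int_t^1 \frac{(1-s)^j}{j!}\,ds = \frac{(1-t)^{j+1}}{(j+1)!}$ (rescaled to $[0,T]$) to advance the polynomial weight, and finally set $\lambda = k/T$ so that $e^{\lambda T}=e^k$ and $\lambda^{-k}=(T/k)^k$, yielding the factor $(Te/k)^k$.

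After the induction produces the bound with the weight $e^{\lambda t}\ge 1$ absorbed and $\lambda=k/T$, the last step is to replace $Y^0\equiv 0$, $Z^0\equiv 0$ at the bottom of the recursion: $\delta Y^0 = -Y^\infty$ and $A^1_t = f(t,Y^\infty_t,Z^\infty_t)-f(t,0,0) + f(t,0,0)$, so more directly $A^1_t = f(t,Y^\infty_t,Z^\infty_t)$ since $Y^1$ solves \eqref{b20a} with driver $f(t,Y^0,Z^0)=f(t,0,0)$ — wait, more carefully, the drift at the first level is $f(t,Y^0_t,Z^0_t)-f(t,Y^\infty_t,Z^\infty_t) = f(t,0,0)-f(t,Y^\infty_t,Z^\infty_t)$, whose $L^2$-norm one bounds in terms of $\E[\lVert f(t,Y^\infty_t,Z^\infty_t)\rVert^2]$ after one more application of \eqref{eq:b01} to $\delta Y^1$ itself (with terminal value $0$), which is where the extra harmless factor $35$ rather than $34$ comes from and where the term $\E[\lVert\xi\rVert^2]$ enters through $\delta Y^1$'s relation to $\xi$; alternatively one starts the iteration counting from $\delta Y^1$ whose terminal value is $0$ and whose driver is controlled by $\E[\lVert f(t,Y^\infty,Z^\infty)\rVert^2]$ plus the contribution propagating $\E[\lVert\xi\rVert^2]$. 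The main obstacle I anticipate is bookkeeping: correctly propagating the mixed $(\lipconstY,\lipconstZ)$ binomial weights with the asymmetric gains (a full $1/j!$-type gain in time from each $Y$-step via \eqref{eq:b03}, versus only a $1/\sqrt{j!}$-type gain from each $Z$-step because the $Z$-term in \eqref{eq:b01} carries no extra $1/\lambda$), and verifying that the finite sum $\sum_{\ell=0}^k \frac{k!\,\lipconstY^\ell\lipconstZ^{k-\ell}T^{\ell/2}}{\ell!(k-\ell)!\sqrt{\ell!}}$ is exactly what the induction yields; the measure-theoretic and integrability points are routine given \cref{b19}.
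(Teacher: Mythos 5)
Your proposal is correct and follows essentially the same route as the paper: subtract the fixed-point equation to get the error BSDE, apply the sup-estimate of \cref{b19} once at the top level, and then iterate the weighted a priori estimate with weight $t^{\alpha}e^{\lambda t}/\Gamma(\alpha+1)$ through the Lipschitz bound $k$ times, which produces exactly the binomial sum with the asymmetric $T^{\ell/2}/\sqrt{\ell!}$ gains, before optimizing $\lambda=k/T$. The only minor imprecision is at the bottom of the recursion: in the paper the term $\E[\lVert\xi\rVert^2]$ enters via the terminal value $-\xi$ of $\delta Y^0=-Y^\infty$ (whose drift is $-f(\cdot,Y^\infty,Z^\infty)$), which is the cleaner of the two bookkeeping options you sketch.
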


\begin{proof}[Proof of \cref{b20}]
First note that  \eqref{b20a} proves that
for all $s\in [0,T]$, $k\in\N_0$ it holds a.s.\ that
\begin{align}\begin{split}
&
 Y^{k}_s-Y_s^\infty\\
&=-\1_{\{0\}}(k)\xi+
\int_{s}^{T}\Bigl[ \1_{\N}(k)
f(s,Y^{\lvert k-1\rvert}_s,Z^{\lvert k-1\rvert}_s)-f(s,Y_s^\infty,Z_s^\infty)\Bigr]dt
-\int_{s}^{T}\bigl[
  Z^{k}_t-Z_t^\infty\bigr]dW_t.
\end{split}\label{k01}\end{align}
This, the tower property, 
Tonelli's theorem,
and 
\cref{b19} (applied for every $k\in \N_0$ with $Y\gets Y^{k} -Y ^\infty$, $A\gets \1_{\N}(k)
f(\cdot,Y^{\lvert k-1\rvert} ,Z^{\lvert k-1\rvert} )-f(\cdot,Y ^\infty,Z ^\infty)$,
$Z\gets  Z^{k} -Z ^\infty$ in the notation of \cref{b19}) 
 prove

\begin{enumerate}[(i)]
\item  that for all $k\in\N_0$, $\lambda \in (0,\infty)$ it holds that
\begin{equation}\begin{split}
&
  \E\!\left[\left[\sup_{t\in[0,T]}\left( e^{\lambda t}\lVert  Y^{k}_t-Y_t^\infty\rVert^2\right)\right]
  +\int_0^{T} e^{\lambda t}\left\lVert   Z^{k}_t-Z_t^\infty\right\rVert^2_{\sfF}\,dt\right]\\
  &\leq \frac{35}{\lambda}\E\!\left[ \lambda e^{\lambda T}\lVert \xi\rVert^2\1_{\{0\}}(k)+\int_0^{T}e^{\lambda t}\left\lVert \1_{\N}(k)
f(t,Y^{\lvert k-1\rvert}_t,Z^{\lvert k-1\rvert}_t)-f(t,Y_t^\infty,Z_t^\infty)\right\rVert^2dt\right],\label{k05}
\end{split}
\end{equation}
and
\item that
for all
$k\in\N_0$,  $\alpha,\lambda\in (0,\infty)$  it holds that
\begin{align}\begin{split}
&
\int_0^T
\frac{t^{\alpha-1}e^{\lambda t}\E\!\left[
\left\lVert    Y^{k}_t-Y_t^\infty\right\rVert^2\right]}{\Gamma(\alpha)}
+
\frac{t^{\alpha} e^{\lambda t}\E\!\left[
\left\lVert    Z^{k}_t-Z_t^\infty \right\rVert_{\sfF}^2\right]}{\Gamma(\alpha+1)}
dt\\
&
\leq 
\frac{e^{\lambda T}T^\alpha
\E\!\left[\lVert \xi\rVert ^2\right] \1_{\{0\}}(k)  }{\Gamma(\alpha+1)}
+\frac{1}{\lambda}\int_0^T
\frac{e^{\lambda t}t^{\alpha}\E\!\left[
\left
\lVert \1_{\N}(k)
f(t,Y^{\lvert k-1\rvert}_t,Z^{\lvert k-1\rvert}_t)-f(t,Y_t^\infty,Z_t^\infty)\right\rVert ^2\right] }{\Gamma(\alpha+1)}\,
dt.
\end{split}\end{align}
\end{enumerate}
This, \eqref{c03}, and the fact that
$\forall\,\alpha\in\N_0\colon \Gamma(\alpha+1)=\alpha! $
show for all 
$\alpha,k\in\N_0$, $\lambda\in (0,\infty)$ that 
\begin{align}
&
\left(\int_0^T\frac{t^{\alpha}e^{\lambda t}}{\alpha!}
\E\!\left[
\bigl\lVert f(t,Y^{k}_t,Z^{k}_t)-f(t, Y_t^\infty, Z^\infty_t)\bigr\rVert^2\right]\!dt\right)^{\!\nicefrac{1}{2}}\nonumber \\
&\leq 
\lipconstY 
\left(\int_0^T\frac{t^{\alpha}e^{\lambda t}}{\alpha!}
\E\!\left[
\bigl\lVert Y^{k}_t- Y_t^\infty\bigr\rVert^2\right]\!dt\right)^{\!\nicefrac{1}{2}}
+\lipconstZ 
\left(\int_0^T\frac{t^{\alpha}e^{\lambda t}}{\alpha!}
\E\!\left[
\bigl\lVert Z^{k}_t- Z_t^\infty\bigr\rVert^2_{\sfF}\right]\!dt\right)^{\!\nicefrac{1}{2}}\nonumber 
\\
&\leq \sum_{\nu=0}^{1}\Biggl[\frac{\lipconstY ^{\nu}\lipconstZ ^{1-\nu}}{\sqrt{\lambda}}
\Biggl(\frac{T^{\alpha+\nu}}{(\alpha+\nu)!}\lambda
e^{\lambda T}\E\bigl[\lVert \xi\rVert^2\bigr]\1_{\{0\}}(k)\nonumber \\
&\qquad\qquad+
\int_0^T
\frac{ t^{\alpha+\nu} e^{\lambda t}}{(\alpha+\nu)!}
\E\!\left[
\bigl\lVert \1_{\N}(k)
f(t,Y^{\lvert k-1\rvert}_t,Z^{\lvert k-1\rvert}_t)-f(t,Y_t^\infty,Z_t^\infty)\bigr\rVert^2\right]\!dt\Biggr)^{\!\nicefrac{1}{2}}
\Biggr].\label{a15}
\end{align}
This and induction prove for all $k\in \N\cap[2,\infty)$, $\lambda\in (0,\infty)$ that
\begin{align}
&
 \left(
\int_0^T\frac{t^{0}e^{\lambda t}}{0!}
\E\!\left[
\bigl\lVert f(t,Y^{ k-1}_t,Z^{ k-1}_t)-f(t,Y_t^\infty,Z_t^\infty)\bigr\rVert^2\right]\!dt\right)^{\!\nicefrac{1}{2}}
\nonumber \\
&\leq 
\sum_{\nu_1,\nu_2,\ldots,\nu_{k-1}=0}^{1}\left[
\frac{\lipconstY ^{\sum_{i=1}^{k-1}\nu_i}\lipconstZ ^{k-1-\sum_{i=1}^{k-1}\nu_i}}{\lambda^{(k-1)/2}}
\left(
\int_0^T
\frac{ t^{\sum_{i=1}^{k-1}\nu_i} e^{\lambda t}}{(\sum_{i=1}^{k-1}\nu_i)!}
\E\!\left[
\bigl\lVert 
f(t,Y_t^0,Z_t^0)-f(t,Y_t^\infty,Z_t^\infty)
\bigr\rVert^2\right]\!dt\right)^{\!\nicefrac{1}{2}}
\right]
\nonumber \\
&\leq 
\sum_{\nu_1,\nu_2,\ldots,\nu_{k-1}=0}^{1}\Biggl[
\frac{\lipconstY ^{\sum_{i=1}^{k-1}\nu_i}\lipconstZ ^{k-1-\sum_{i=1}^{k-1}\nu_i}}{\lambda^{(k-1)/2}}\sum_{\nu_k=0}^{1}\frac{\lipconstY ^{\nu_k}\lipconstZ ^{1-\nu_k}}{\sqrt{\lambda}}\nonumber \\
&\qquad\qquad \qquad\qquad\cdot 
\left(
\frac{T^{\sum_{i=1}^{k}\nu_i}}{(\sum_{i=1}^{k}\nu_i)!}\lambda
e^{\lambda T}\E\bigl[\lVert \xi\rVert^2\bigr]+
\int_0^T
\frac{ t^{\sum_{i=1}^{k}\nu_i} e^{\lambda t}}{(\sum_{i=1}^{k}\nu_i)!}
\E\!\left[
\bigl\lVert f(t,Y_t^\infty,Z_t^\infty)\bigr\rVert^2\right]\!dt\right)^{\!\nicefrac{1}{2}}
\Biggr]\nonumber \\
&=\sum_{\nu_1,\nu_2,\ldots,\nu_{k}=0}^{1}
\frac{\lipconstY ^{\sum_{i=1}^{k}\nu_i}\lipconstZ ^{k-\sum_{i=1}^{k}\nu_i}}{\lambda^{k/2}}
\nonumber \\&\qquad\qquad\qquad\cdot \left(
\frac{T^{\sum_{i=1}^{k}\nu_i}}{(\sum_{i=1}^{k}\nu_i)!}\lambda
e^{\lambda T}\E\bigl[\lVert \xi\rVert^2\bigr]+
\int_0^T
\frac{ t^{\sum_{i=1}^{k}\nu_i} e^{\lambda t}}{(\sum_{i=1}^{k}\nu_i)!}
\E\!\left[
\bigl\lVert  f(t,Y_t^\infty,Z_t^\infty)\bigr\rVert^2\right]\!dt\right)^{\!\nicefrac{1}{2}}
\label{c02}
\end{align}
This and \eqref{a15} show for all $k\in \N$, $\lambda\in (0,\infty)$ that
\begin{align}\begin{split}
&
 \left(
\int_0^T e^{\lambda t}
\E\!\left[
\bigl\lVert f(t,Y^{ k-1}_t,Z^{ k-1}_t)-f(t,Y_t^\infty,Z_t^\infty)\bigr\rVert^2\right]\!dt\right)^{\!\nicefrac{1}{2}}
\\
&\leq \sum_{\ell=0}^{k}\left[
\frac{k!}{\ell! (k-\ell)!}
\frac{\lipconstY ^{\ell}\lipconstZ ^{k-\ell}}{\lambda^{k/2}}
\left(
\frac{T^{\ell}}{\ell!}\lambda
e^{\lambda T}\E\bigl[\lVert \xi\rVert^2\bigr]+
\int_0^T
\frac{ t^{\ell} e^{\lambda t}}{\ell!}
\E\!\left[
\bigl\lVert  f(t,Y_t^\infty,Z_t^\infty)\bigr\rVert^2\right]\!dt\right)^{\!\nicefrac{1}{2}}
\right]
\\
&\leq\left[ \sum_{\ell=0}^{k}
\frac{k!}{\ell! (k-\ell)!}
\frac{\lipconstY ^{\ell}\lipconstZ ^{k-\ell}}{\lambda^{k/2}}
\frac{T^{\ell/2}e^{\lambda T/2}}{\sqrt{\ell!}}
\right]
\left(
\lambda
\E\bigl[\lVert \xi\rVert^2\bigr]+
\int_0^T
\E\!\left[
\bigl\lVert f(t,Y_t^\infty,Z_t^\infty)\bigr\rVert^2\right]\!dt\right)^{\!\nicefrac{1}{2}}.
\end{split}\label{c04}\end{align}
This and \eqref{k05} prove for all $k\in \N$, $\lambda\in (0,\infty)$ 
that 
\begin{align}\begin{split}
&\E\!\left[\sup_{t\in[0,T]}\left(e^{\lambda t}\left\lVert Y^{k}_t-Y_t^\infty\right\rVert^2\right)+\int_0^{T}e^{\lambda t}\left\lVert Z^{k}_t-Z_t^\infty\right\rVert^2_{\sfF}\,dt\right]\\
&
\leq
\frac{35}{\lambda} \E\!\left[\int_0^{T}e^{\lambda t} 
\left\lVert
f(t,Y^{ k-1}_t,Z^{ k-1}_t)-f(t,Y_t^\infty,Z_t^\infty)\right\rVert^2
\,dt\right] 
\\
&\leq
 \frac{35}{\lambda}\left[
 \sum_{\ell=0}^{k}
\frac{k!}{\ell! (k-\ell)!}
\frac{\lipconstY ^{\ell}\lipconstZ ^{k-\ell}}{\lambda^{k/2}}
\frac{T^{\ell/2}e^{\lambda T/2}}{\sqrt{\ell!}}
\right]^2
\left(
\lambda
\E\bigl[\lVert \xi\rVert^2\bigr]+
\int_0^T
\E\!\left[
\bigl\lVert f(t,Y_t^\infty,Z_t^\infty)\bigr\rVert^2\right]\!dt\right) .
\label{k07}\end{split}
\end{align}
Furthermore, observe  for all
$k\in\N$  that
\begin{align}
&
\left[
 \sum_{\ell=0}^{k}
\frac{k!}{\ell! (k-\ell)!}
\frac{\lipconstY ^{\ell}\lipconstZ ^{k-\ell}}{\lambda^{k/2}}
\frac{T^{\ell/2}e^{\lambda T/2}}{\sqrt{\ell!}}
\right]^2\Biggr|_{\lambda=\frac{k}{T}}
=
\left(\frac{Te}{k}\right)^{k}
\left[
 \sum_{\ell=0}^{k}
\frac{k!\lipconstY ^{\ell}\lipconstZ ^{k-\ell}T^{\ell/2}}{\ell! (k-\ell)!\sqrt{\ell!}}
\right]^2.
\end{align}
This and \eqref{k07}
yield for all
$k\in\N$ that
\begin{align}\begin{split}
&
\E\!\left[\sup_{t\in[0,T]}\left(\left\lVert Y^{k}_t-Y_t^\infty\right\rVert^2\right)+\int_0^{T}
\left\lVert Z^{k}_t-Z_t^\infty\right\rVert^2_{\sfF}\,dt\right]\\
&
\leq 
\frac{35T}{k}
\left(\frac{Te}{k}\right)^{k}
\left[
 \sum_{\ell=0}^{k}
\frac{k!\lipconstY ^{\ell}\lipconstZ ^{k-\ell}T^{\ell/2}}{\ell! (k-\ell)!\sqrt{\ell!}}
\frac{}{}
\frac{}{}
\right]^2
\left(
\frac{k}{T}
\E\bigl[\lVert \xi\rVert^2\bigr]+
\int_0^T
\E\!\left[
\bigl\lVert f(t,Y_t^\infty,Z_t^\infty)\bigr\rVert^2\right]\!dt\right).
\end{split}\label{k08}\end{align}
Next note that \eqref{c03} ensures that
\begin{align}
&\left(\int_0^T
\E\bigl[
\lVert f(t,Y_t^\infty,Z_t^\infty)\rVert^2\bigr]dt\right)^{\nicefrac{1}{2}}
\\
&\leq 
\left(\int_0^T
\E\bigl[
\lVert f(t,0,0)\rVert^2\bigr]dt\right)^{\nicefrac{1}{2}}
+
\lipconstY
\left(
\int_0^T
\E\bigl[
\lVert Y_t^\infty \rVert^2\bigr]dt\right)^{\nicefrac{1}{2}}
+
\lipconstZ
\left(
\int_0^T
\E\bigl[
\lVert Z_t^\infty\rVert^2_{\sfF}\bigr]dt\right)^{\nicefrac{1}{2}}<\infty.\nonumber
\end{align}
This, the fact that
$\E [\lVert\xi\rVert^2]<\infty$, and
\eqref{k08}
complete the proof of \cref{b20}.
\end{proof}
\begin{remark}\label{r01}Assume the setting of \cref{b20}. Then
it holds for all $k\in\N$ that
\begin{align}\begin{split}
&
35
\left(\frac{Te}{k}\right)^{k}
\left[
 \sum_{\ell=0}^{k}
\frac{k!\lipconstY ^{\ell}\lipconstZ ^{k-\ell}T^{\ell/2}}{\ell! (k-\ell)!\sqrt{\ell!}}
\right]^2\leq 
35\left(\frac{\max\{T^2,1\}e\max\{\lipconstY^2,\lipconstZ^2 \}}{k}\right)^{k}
\left[ \sum_{\ell=0}^{k}\frac{k!}{(k-\ell)!\ell!}\right]^2\\
&=35\left(\frac{4\max\{T^2,1\}e\max\{\lipconstY^2,\lipconstZ^2 \}}{k}\right)^{k}
\leq 
35\frac{\left(4\max\{T^2,1\}e\max\{\lipconstY^2,\lipconstZ^2 \}\right)^{k}}{k!}
.
\end{split}\end{align}
\end{remark}

\begin{remark}\label{r02}Assume the setting of \cref{b20} and assume that $\lipconstZ=0 $.  Then
it holds for all $k\in\N$ that
\begin{align}\begin{split}
&
35
\left(\frac{Te}{k}\right)^{k}
\left[
 \sum_{\ell=0}^{k}
\frac{k!\lipconstY ^{\ell}\lipconstZ ^{k-\ell}T^{\ell/2}}{\ell! (k-\ell)!\sqrt{\ell!}}
\right]^2=35
\left(\frac{Te}{k}\right)^{k}
\left[
\frac{\lipconstY ^{k}T^{k/2}}{ \sqrt{k!}}
\right]^2\leq 35 \frac{(T^2e\lipconstY^2)^k}{(k!)^2}.
\end{split}\end{align}
\end{remark}

\subsubsection*{Acknowledgements}
This work has been funded by the Deutsche Forschungsgemeinschaft (DFG, German Research Foundation) through the research grant 
 HU1889/7-1.

{
\bibliographystyle{acm}
\bibliography{bibfile}

}

\end{document}